\def\titlerunning#1{\gdef\titrun{#1}}
\def\author#1{\gdef\autrun{\def\and{\unskip, }#1}\gdef\@author{#1}}
\def\address#1{{\def\and{\\\hspace*{18pt}}\renewcommand{\thefootnote}{}%
\footnote {#1}}%
\markboth{\autrun}{\titrun}}
\def\email#1{e-mail: #1}
\def\keywords#1{\par\medskip
\noindent\textbf{Keywords.} #1}
\def\b{{\bf b}}
\def\e{{\bf e}}
\def\f{{\bf f}}
\def\i{{\bf i}}
\def\j{{\bf j}}
\def\k{{\bf k}}
\def\r{{\bf r}}
\def\s{{\bf s}}
\def\u{{\bf u}}
\def\Q{{\bf Q}}
\def\rd{{\rm d}}
\def\RR{{\mathop{{\rm I}\kern-.2em{\rm R}}\nolimits}}
\newcommand{\bfomega}{{\mbox{\boldmath $\omega$}}}
\newcommand{\X}{{\cal X}}
\newcommand{\Y}{{\cal Y}}
\renewcommand{\Q}{{\cal Q}}
\newcommand{\A}{{\cal A}}
\newcommand{\B}{{\cal B}}
\newcommand{\C}{{\cal C}}
\newcommand{\nn}{\mathbb{N}}
\newcommand{\rr}{\mathbb{R}}
\newcommand{\cc}{\mathbb{C}}
\newcommand{\hh}{\mathbb{H}}
\newcommand{\be}{\begin{equation}}
\newcommand{\ee}{\end{equation}}
\newcommand{\ba}{\begin{eqnarray}}
\newcommand{\ea}{\end{eqnarray}}
\newcommand{\bi}{\begin{itemize}}
\newcommand{\ei}{\end{itemize}}
\newtheorem{teo}{Theorem}[section]
\newtheorem{cor}[teo]{Corollary}
\newtheorem{oss}[teo]{Remark}
\newtheorem{defi}[teo]{Definition}
\newtheorem{lem}[teo]{Lemma}
\newtheorem{pro}[teo]{Proposition}
\newtheorem{ese}[teo]{Example}
\begin{document}

\titlerunning{}

\title{\bf 
A comprehensive characterization \\ 
of the set of polynomial curves with \\
rational rotation-minimizing frames
}

\author{Rida~T.~Farouki, Graziano Gentili, Carlotta Giannelli, \\ 
Alessandra Sestini, and Caterina Stoppato}

\date{}

\maketitle

\address{Rida~T.~Farouki,
Department of Mechanical and Aerospace Engineering,
University of California, Davis, CA 95616, USA, corresponding author:
e--mail farouki@ucdavis.edu, phone 530--752-1779, fax 530--752--4158
\and
Graziano Gentili, Carlotta Giannelli, Alessandra Sestini, Caterina Stoppato,
Dipartimento di Matematica e Informatica ``U. Dini,'' 
Universit\`a %degli Studi 
di Firenze, Viale Morgagni 67/A, I--50134 Firenze, Italy, \hskip7pt\email{gentili@math.unifi.it, carlotta.giannelli@unifi.it, alessandra.sestini@unifi.it, stoppato@math.unifi.it}
}

\begin{abstract}
\noindent
A rotation--minimizing frame $(\f_1,\f_2,\f_3)$ on a space curve $\r(\xi)$ 
defines an orthonormal basis for $\rr^3$ in which $\f_1=\r'/|\r'|$ is the 
curve tangent, and the normal--plane vectors $\f_2$, $\f_3$ exhibit no
instantaneous rotation about $\f_1$. Polynomial curves that admit \emph
{rational} rotation--minimizing frames (or RRMF curves) form a subset 
of the Pythagorean--hodograph (PH) curves, specified by integrating the 
form $\r'(\xi)=\A(\xi)\,\i\,\A^*(\xi)$ for some quaternion polynomial 
$\A(\xi)$. By introducing the notion of the \emph{rotation indicatrix} and 
the \emph{core} of the quaternion polynomial $\A(\xi)$, a comprehensive 
characterization of the complete space of RRMF curves is developed, that 
subsumes all previously known special cases. This novel characterization
helps clarify the structure of the complete space of RRMF curves, 
distinguishes the spatial RRMF curves from trivial (planar) cases,
and paves the way toward new construction algorithms.
\end{abstract}

%%%%%%%%%%%%%%%%%%%%%%%%%%%%%%%%%%%%%%%%%%%%%%%%%%%%%%

\newpage

\bigskip\noindent
{\bf Short title}: rational rotation--minimizing frames

\bigskip\noindent
{\bf AMS Subject Classifications} (2010): \\
12D05, 12Y05, 14H45, 14H50, 53A04, 68U05, 68U07

\bigskip\noindent

\keywords{Pythagorean--hodograph curves; Rotation--minimizing frames; 
Quaternion polynomials; Rotation indicatrix.}

\newpage

\section{Introduction}

The specification of rigid--body motions involving coordinated 
translational and orientational components is a fundamental problem 
in spatial kinematics, of relevance to applications such as robot path 
planning, computer animation, motion control, and geometric design. 
Among all conceivable correlations of position and orientation along 
a specified path, perhaps the most important and intuitive is the \emph
{adapted rotation--minimizing motion}, in which the body exhibits 
no instantaneous rotation about the path tangent --- i.e., its angular 
velocity component in the tangent direction is exactly zero.

An orthonormal frame exhibiting this property along a parametric curve 
$\r(\xi)$ in $\mathbb{R}^3$ is known as a \emph{rotation--minimizing 
frame} (RMF) or \emph{Bishop frame} \cite{bishop75}. However, the RMFs 
on polynomial or rational curves do not in general admit simple (rational) 
closed--form expression, and must be approximated --- see, for example
\cite{farouki03,wang97,wang08}. Exact representations are clearly preferable 
whenever possible, not only because they avoid approximation errors, but 
also because they are more concise and ``robust.'' Such considerations have 
prompted great interest in the study of polynomial curves $\r(\xi)$ with 
RMFs that admit a \emph{rational} dependence on the curve parameter $\xi$, 
and considerable progress has recently been achieved in the characterization 
and construction of such \emph{rational rotation--minimizing frame} (RRMF) 
\emph{curves} --- especially the simplest non--trivial examples, the 
quintics \cite{farouki10a,farouki09,farouki10b,farouki12b,han08}.

However, the different types of RRMF curves studied thus far have been
investigated on a case--by--case basis, through idiosyncratic approaches, 
and these known cases suggest a rich structure to the entire set of RRMF 
curves. A theoretical framework that encompasses all the currently--known 
RRMF curve types, illuminates the structure of the entire space of RRMF 
curves, and furnishes algorithms for their construction through the
satisfaction of geometrical constraints, is therefore highly desirable.

To ensure a rational unit tangent vector, this problem must be addressed in the 
established theoretical framework of the spatial \emph{Pythagorean--hodograph} 
(PH) \emph{curves} \cite{farouki08}, i.e., polynomial curves $\r(\xi)=
(x(\xi),y(\xi),z(\xi))$ in $\mathbb{R}^3$ such that the components of the 
derivative or hodograph $\r'(\xi)=(x'(\xi),y'(\xi),z'(\xi))$ satisfy 
\be
\label{pythag}
x'^2(\xi) + y'^2(\xi) + z'^2(\xi) \,\equiv\, \sigma^2(\xi)
\ee
for some polynomial $\sigma(\xi)$. The solutions to (\ref{pythag}) can be 
characterized \cite{choi02b,farouki02} in terms of the quaternion algebra 
$\hh=\rr+\rr\i+\rr\j+\rr\k$. We identify with $\rr^3$ the vector subspace 
$\rr\i+\rr\j+\rr\k\subset\hh$, whose elements are called \emph{pure vectors}. 
When the Euclidean norm $|\A|$ of $\A \in \hh$ equals $1$, $\A$ is called a 
\emph{unit quaternion}. A pure vector $\u \in \rr\i+\rr\j+\rr\k$ with $|\u|=1$ 
is called a \emph{unit vector}. Now, for some quaternion polynomial
\be
\label{A}
\A(\xi) \,=\, u(\xi) + v(\xi)\,\i + p(\xi)\,\j + q(\xi)\,\k \,, 
\ee
where $u(\xi),v(\xi),p(\xi),q(\xi)$ are real polynomials, to satisfy (\ref
{pythag}) the hodograph $\r'(\xi)$ must be of the form
\ba
\label{hodo}
\r'(\xi) \!\! &=& \!\! \A(\xi)\,\i\,\A^*(\xi) \,=\,
[\,u^2(\xi)+v^2(\xi)-p^2(\xi)-q^2(\xi)\,]\,\i  \nonumber \\
&+& \!\! 2\,[\,u(\xi)q(\xi)+v(\xi)p(\xi)\,]\,\j  
\,+\, 2\,[\,v(\xi)q(\xi)-u(\xi)p(\xi)\,]\,\k \,, 
\ea
$\A^*(\xi)=u(\xi)-v(\xi)\,\i-p(\xi)\,\j-q(\xi)\,\k$ being the 
conjugate of $\A(\xi)$. Since this amounts to specifying $\r'(\xi)$ 
through a continuous family of scaling/rotation transformations acting 
on the unit vector\footnote{The choice of $\i$ is merely conventional:
it may be replaced by any other unit vector.} $\i$, the quaternion 
polynomial $\A(\xi)$ is said to \emph{generate} (or be the \emph
{pre--image} of) the hodograph $\r'(\xi)$.

The present paper re--interprets the characterization of RRMF curves 
due to Han \cite{han08} in terms of the natural Euclidean metric of the 
quaterion space $\hh$. After reviewing some basic properties of RRMF 
curves in Section~\ref{sec:pre}, the notion of the \emph{rotation indicatrix}
of a quaternion polynomial $\A(\xi)$ is introduced in Section~\ref
{sec:fourierindicatrix}. The characterization of the class $\mathscr{F}$ 
of quaternion polynomials that generate RRMF curves is then reduced to 
the study of the class $\mathscr{F}_0$ of quaternion polynomials with 
vanishing rotation indicatrix in Section~\ref{sec:reduction}, and two
characterizations of $\mathscr{F}_0$ are presented in Section~\ref
{sec:vanishingindicatrix}.

Based on these results, a precise characterization for the set of 
quaternion polynomials that generate non--planar polynomial PH curves 
with rational RMFs is developed in Section~\ref{sec:nonplanar}. Moreover, 
examples of polynomials $\A(\xi)\in\mathscr{F}_0$ of degree $n$ that 
generate non--planar RRMF curves are exhibited for all $n\geq3$. In 
particular, complete characterizations of such polynomials are stated 
for $n=3$ and $4$ in Section~\ref{sec:nontrivial}. Finally, Section~\ref
{sec:exm} presents a selection of example curves generated by polynomials
$\A(\xi)\in\mathscr{F}\setminus\mathscr{F}_0$, and Section~\ref{sec:closure} 
summarizes the results of this study and makes some concluding remarks.

\section{Preliminaries on RRMF curves}\label{sec:pre}

For a PH curve $\r(\xi)$ satisfying \eqref{hodo}, the \emph{parametric 
speed} (i.e., the derivative $\rd s/\rd\xi$ of arc length $s$ with respect 
to the curve parameter $\xi$) is defined by
\[
\sigma(\xi) \,=\, |\r'(\xi)| \,=\, |\A(\xi)|^2 
\,=\, u^2(\xi)+v^2(\xi)+p^2(\xi)+q^2(\xi) \,.
\]
Since $\sigma(\xi)$ is a polynomial, PH curves possess rational unit tangent 
vectors, polynomial arc length functions, and many other advantageous features 
\cite{farouki08}. 

Before proceeding, we introduce some useful notations. For any field $F$, 
the symbol $F[\xi]$ will denote the ring of polynomials over $F$ in the 
variable $\xi$. For any choice of polynomials $p_1(\xi),\ldots,p_n(\xi)\in 
F[\xi]$, we will denote by $\gcd_F(p_1(\xi),\ldots,p_n(\xi))$ their monic 
\emph{greatest common divisor}.

\begin{defi}\label{primitive}
A hodograph $\r'(\xi)=(x'(\xi),y'(\xi),z'(\xi))$ is \emph{primitive} if 
its components are coprime in $\rr[\xi]$, i.e., $\gcd_\rr(x'(\xi),y'(\xi),
z'(\xi))=1$.
\end{defi}

Clearly, for $\r'(\xi)$ to be primitive, the components $u(\xi),v(\xi),
p(\xi),q(\xi)$ of (\ref{A}) must be coprime in $\rr[\xi]$. This is a 
necessary, but not sufficient, condition. Writing (\ref{A}) in terms of 
the complex polynomials $\alpha(\xi)=u(\xi)+v(\xi)\,\i$, $\beta(\xi)=
p(\xi)+q(\xi)\,\i$ as $\A(\xi)=\alpha(\xi)+\beta(\xi)\,\j$, one can verify 
\cite{farouki04} that $x'(\xi)$, $y'(\xi)$, $z'(\xi)$ have the common 
factor $|\gcd_\cc(\alpha(\xi),\beta^*(\xi))|^2$ where $\beta^*(\xi)=p(\xi)
-q(\xi)\,\i$ is the conjugate of $\beta(\xi)$. Hence, $\alpha(\xi)$ and 
$\beta^*(\xi)$ must also be coprime in order for the expression (\ref
{hodo}) to generate a primitive hodograph.

The {\it Euler--Rodrigues frame\/} (ERF) is a rational orthonormal frame
for $\rr^3$, defined \cite{choi02a} on any spatial PH curve by
\be
\label{erf}
(\e_1(\xi),\e_2(\xi),\e_3(\xi)) \,=\, \frac{(\A(\xi)\,\i\,\A^*(\xi),
\A(\xi)\,\j\,\A^*(\xi),\A(\xi)\,\k\,\A^*(\xi))}{|\A(\xi)|^2} \,.
\ee
This is an ``adapted'' frame, in the sense that $\e_1$ coincides with the 
curve tangent, while $\e_2$ and $\e_3$ span the curve normal plane at each 
point. The ERF variation is characterized by its angular velocity $\bfomega
=\omega_1\e_1+\omega_2\e_2+\omega_3\e_3$ through the relations $\e_k'=
\bfomega\times\e_k$ for $k=1,2,3$. In particular, the angular velocity 
component $\omega_1$, specified \cite{farouki16} by
\be
\label{omega1}
\omega_1 \,=\, \e_3\cdot\e_2' \,=\, -\,\e_2\cdot\e_3' 
\,=\, \frac{2(uv'-u'v-pq'+p'q)}{u^2+v^2+p^2+q^2} \,,
\ee
represents the rate of rotation of $\e_2$ and $\e_3$ about $\e_1$. Among all 
possible orthonormal adapted frames, the \emph{rotation--minimizing frames} 
(RMFs) that satisfy $\omega_1 \equiv 0$, also known \cite{bishop75} as 
\emph{Bishop frames}, are of the greatest interest in various practical
applications, such as spatial motion planning, computer animation, robotics, 
and swept surface constructions.

If $(\f_1,\f_2,\f_3)$ is an adapted RMF on $\r(\xi)$, where $\f_1=\r'/|\r'|$ 
is the curve tangent, the condition $\omega_1\equiv0$ implies that $\f_2$ 
and $\f_3$ exhibit no instantaneous rotation about $\f_1$. Note that a 
one--parameter family of RMFs exists on any given curve, since the initial 
normal--plane orientation of $\f_2$, $\f_3$ may be freely chosen. A polynomial 
curve with a rational RMF (called an \emph{RRMF curve}) is necessarily a 
PH curve, since PH curves are the only polynomial curves with rational unit 
tangent vectors.\footnote{Rational curves with rational RMFs also exist 
\cite{barton10}, but are not considered herein.} Although the ERF is a 
rational  adapted frame, it is clear from (\ref{omega1}) that it is not, 
in general, an RMF. Nevertheless, it serves \cite{han08} as a useful 
intermediary in identifying PH curves that admit rational RMFs.

As noted by Han \cite{han08}, the normal--plane vectors $\f_2(\xi),
\f_3(\xi)$ of a rational RMF must be obtainable from the ERF vectors 
$\e_2(\xi),\e_3(\xi)$ through a rational normal--plane rotation, of 
the form
\be
\label{f23}
\left[\! \begin{array}{c} 
\f_2(\xi) \\ \f_3(\xi) 
\end{array} \!\right] \,=\,
\frac{1}{a^2(\xi)+b^2(\xi)}
\left[\! \begin{array}{cc} 
a^2(\xi)-b^2(\xi) & -\,2\,a(\xi)b(\xi) \\
2\,a(\xi)b(\xi) & a^2(\xi)-b^2(\xi) 
\end{array} \!\right]
\left[\! \begin{array}{c} 
\e_2(\xi) \\ \e_3(\xi) 
\end{array} \!\right]
\ee
for coprime real polynomials $a(\xi)$, $b(\xi)$. This amounts to defining
$\f_2(\xi),\f_3 (\xi)$ by a normal--plane rotation of $\e_2(\xi)$, $\e_3
(\xi)$ through the angle 
\[
\theta(\xi) \,=\, -\,2\arctan\frac{b(\xi)}{a(\xi)} \,,
\] 
which incurs angular velocity $\theta'=-\,2(ab'-a'b)/(a^2+b^2)$ in the 
$\f_1$ direction. For $\f_2,\f_3$ to be rotation--minimizing, this must 
exactly cancel the ERF angular velocity component (\ref{omega1}). Based 
on these considerations, Han \cite{han08} stated the following criterion 
identifying the RRMF curves as a subset of all PH curves.

\begin{teo}
\label{Han}
The PH curve generated by the quaternion polynomial (\ref{A}) has a 
rational RMF if and only if coprime real polynomials $a(\xi)$, $b(\xi)$ 
exist, such that the components of $\A(\xi)$ satisfy
\be
\label{rrmfeqn}
\frac{uv'-u'v-pq'+p'q}{u^2+v^2+p^2+q^2}
\,\equiv\, \frac{ab'-a'b}{a^2+b^2} \,.
\ee
\end{teo}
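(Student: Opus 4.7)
The plan is to prove both implications of the equivalence through direct use of the normal--plane rotation framework sketched in the paragraph preceding the theorem.

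For the sufficiency direction, assume coprime polynomials $a(\xi), b(\xi)$ satisfy (\ref{rrmfeqn}) and construct an explicit rational RMF: set $\f_1 = \e_1$ and define $(\f_2, \f_3)$ by formula (\ref{f23}). The $2 \times 2$ matrix in (\ref{f23}) is a genuine rotation matrix, since its entries $(a^2-b^2)/(a^2+b^2)$ and $2ab/(a^2+b^2)$ form the standard rational parametrization of the unit circle, so $(\f_1, \f_2, \f_3)$ is a rational orthonormal adapted frame. Its angular velocity component along $\f_1$ equals the ERF rate $\omega_1$ of (\ref{omega1}) plus the rate $\theta' = -2(ab'-a'b)/(a^2+b^2)$ of the superimposed normal--plane rotation, by the additivity of angular velocities for rotations about a common axis. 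The RMF condition $\omega_1 + \theta' \equiv 0$ is then precisely (\ref{rrmfeqn}).

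For the necessity direction, suppose $(\f_1, \f_2, \f_3)$ is a rational RMF on $\r(\xi)$. Since $\f_1 = \e_1$, the pairs $(\e_2, \e_3)$ and $(\f_2, \f_3)$ are rational orthonormal bases of the same normal plane at each $\xi$, hence related by a planar rotation through some angle $\theta(\xi)$ with $\cos\theta(\xi), \sin\theta(\xi)$ rational functions of $\xi$. Applying the tangent--half--angle substitution $t = \sin\theta/(1+\cos\theta)$ yields a rational function $t(\xi)$; writing $t(\xi) = b(\xi)/a(\xi)$ in lowest terms produces coprime real polynomials $a(\xi), b(\xi)$ that realise (\ref{f23}). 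The RMF assumption, together with the same angular velocity additivity used above, then forces $\omega_1 + \theta' \equiv 0$, which is exactly (\ref{rrmfeqn}).

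The only step that goes beyond routine calculation is the rational lifting $(\cos\theta, \sin\theta) \mapsto (a, b)$ in the necessity direction: one must argue that the pointwise tangent--half--angle substitution produces a globally rational parameter $t(\xi)$, and then clear denominators and divide out $\gcd_\rr(a, b)$ to obtain coprime polynomials. The degenerate locus $\cos\theta(\xi) = -1$ is handled either by a change of chart on $S^1$ or by noting that it occurs only at isolated parameter values, at which the formula extends by continuity. Once $a(\xi), b(\xi)$ are secured, the identity (\ref{rrmfeqn}) follows by differentiating (\ref{f23}) and substituting the closed form (\ref{omega1}).
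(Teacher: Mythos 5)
Your proposal is correct and follows essentially the same route as the paper, which does not prove this theorem itself but cites Han \cite{han08} after sketching exactly this argument: a rational adapted frame must arise from the ERF by a rational normal--plane rotation of the form \eqref{f23}, and the rotation--minimizing condition is the cancellation $\omega_1+\theta'\equiv 0$. Your filling-in of the necessity direction (rationality of $\cos\theta,\sin\theta$, the tangent--half--angle lift to coprime $a,b$, and the degenerate chart) is the right way to make that sketch rigorous, modulo the trivial normalization that one may assume the frame positively oriented.
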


\medskip\noindent
As noted in \cite{farouki12}, if condition~\eqref{rrmfeqn} is satisfied and 
$\B(\xi):=\A(\xi)(a(\xi)-b(\xi)\,\i)$, the rational RMF can be expressed as
\be
\label{rmf}
(\f_1(\xi),\f_2(\xi),\f_3(\xi)) \,=\, \frac{(\B(\xi)\,\i\,\B^*(\xi),
\B(\xi)\,\j\,\B^*(\xi),\B(\xi)\,\k\,\B^*(\xi))}{|\B(\xi)|^2} \,.
\ee

Although important results concerning the identification 
and construction of RRMF curves have recently been derived \cite
{barton10,cheng16,choi02a,farouki10a,farouki09,farouki12,farouki13,farouki10b,farouki12b,han08}
a comprehensive theory of them has thus far remained elusive \cite{farouki16}.
The goal of this study is to develop a unified approach to the set of PH 
curves that satisfy the RRMF condition \eqref{rrmfeqn} --- see Theorems~\ref
{finale} and \ref{G} below. This approach provides a new understanding of 
expression~\eqref{rmf}, expressed in Proposition~\ref{prop:great}. Since the 
Frenet frame of any planar PH curve is trivially a rational RMF, the focus 
is mainly on \emph{spatial} PH curves, with non--vanishing torsion. However, 
the analysis covers all cases and includes a criterion to distinguish
non--planar RRMF curves from planar curves --- see Theorem~\ref{thm:sum}
below. 

\section{Rotation indicatrix of RRMF curves}
\label{sec:fourierindicatrix}

Recall that $\hh$ denotes the real algebra of quaternions, and let 
$\hh[\xi]$ denote the real algebra of quaternion polynomials in the single 
variable $\xi$. In the present context, we regard a polynomial $\A(\xi)\in 
\hh[\xi]$ as the corresponding polynomial curve $\A:\rr\to\hh$, and use 
the notations
\[
\left(\sum_{r=0}^m \A_r\xi^r \right) \left(\sum_{s=0}^n \B_s\xi^s \right) 
= \sum_{r=0}^{m+n} \left( \sum_{s=0}^r \A_s\B_{r-s} \right) \xi^r \,,
\]
\[
\left( \sum_{r=0}^m \A_r\xi^r \right)^{\!\!*} = \sum_{r=0}^m \A_r^*\xi^r 
\]
for the multiplication and conjugation operations in $\hh[\xi]$. These
notations are also used for the subalgebra $\cc[\xi]$ of $\hh[\xi]$.

To obtain PH curves that are regular on all of $\rr$, the hodograph 
\eqref{hodo} should have no zeros in $\rr$. To this end, only elements
$\A(\xi)$ of the sets
\begin{align*}
\widetilde{\cc[\xi]}&:=\{a+b\,\i \in \cc[\xi] : 
a,b \in \rr[\xi], \textstyle{\gcd_\rr}(a,b)=1\},\\
\widetilde{\hh[\xi]}&:=\{u+v\,\i+p\,\j+q\,\k \in \hh[\xi] : 
u,v,p,q \in \rr[\xi], \textstyle{\gcd_\rr}(u,v,p,q)=1\}
\end{align*}
of complex and quaternion polynomials that have coprime real components 
are considered. Also, let $\langle\ ,\ \rangle$ denote the standard 
Euclidean scalar product of $\rr^4\cong\hh$. Then, for any two quaternions 
$$
\X=x_0+x_1\i+x_2\j+x_3\k
\quad \mbox{and} \quad 
\Y=y_0+y_1\i+y_2\j+y_3\k ,
$$
the quantity
\begin{equation}
\label{fou}
\frac{\langle \X,\Y\rangle}{\langle \Y, \Y\rangle} =
\frac{x_0y_0+x_1y_1+x_2y_2+x_3y_3}{y_0^2+y_1^2+y_2^2+y_3^2}
\end{equation}
is called the \emph{normalized component} of $\X$ along $\Y$ --- it indicates 
the oriented length of the orthogonal projection of $\X$ onto $\Y$, measured 
as a multiple of $|\Y|$. The normalized component offers a ``geometrical'' 
interpretation of the condition \eqref{rrmfeqn} that characterizes the RRMF 
curves, as follows.

\begin{lem}
\label{indicatrix}
For a quaternion polynomial of the form (\ref{A}) with coprime real 
components, the normalized component of $\A'\i$ along $\A$, and of $\A'$ 
along $\A\,\i$, can be computed as follows
\begin{equation} 
\label{fourier}
\frac{\langle\A'\i,\A\rangle}{\langle\A,\A\rangle} = 
-\frac{\langle\A',\A\,\i\rangle}{\langle\A\,\i,\A\,\i\rangle} =
-\frac{\langle\A',\A\,\i\rangle}{\langle\A,\A\rangle} =
-\frac{v'u-u'v-q'p+p'q}{u^2+v^2+p^2+q^2} \,.
\end{equation}
\end{lem}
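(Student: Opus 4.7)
The plan is to split the chain of equalities into two structural identities that hold for any quaternion polynomial together with one direct component computation. First I would record two basic facts about the Euclidean scalar product on $\hh$: for any $\X,\Y,\Z\in\hh$ we have $\langle\X,\Y\rangle=\tfrac{1}{2}(\X\Y^*+\Y\X^*)$ (the ``scalar part'' formula), from which the move-across rule
$$\langle\X\Y,\Z\rangle \,=\, \langle\X,\Z\Y^*\rangle$$
follows by associativity, and the norm is multiplicative, $|\X\Y|=|\X|\,|\Y|$. Both statements pass verbatim from $\hh$ to $\hh[\xi]$ when the inner product is taken coefficient-wise in $\rr[\xi]$.

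Applying the multiplicativity of the norm to $\Y=\i$ gives $\langle\A\,\i,\A\,\i\rangle=|\A|^{2}|\i|^{2}=\langle\A,\A\rangle$, which yields the third equality of \eqref{fourier}. Applying the move-across rule with $\Y=\i$, using $\i^{*}=-\i$, gives
$$\langle\A'\,\i,\A\rangle \,=\, \langle\A',\A\,\i^{*}\rangle \,=\, -\,\langle\A',\A\,\i\rangle,$$
which, combined with the previous identity, produces the first equality in \eqref{fourier}.

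The remaining task is to identify $\langle\A'\,\i,\A\rangle/\langle\A,\A\rangle$ with the rational function on the right of \eqref{fourier}. Using $\i^{2}=-1$, $\j\,\i=-\k$, $\k\,\i=\j$, I would expand
$$\A'\,\i \,=\, (u'+v'\,\i+p'\,\j+q'\,\k)\,\i \,=\, -v' + u'\,\i + q'\,\j - p'\,\k,$$
and then take the coefficient-wise scalar product with $\A=u+v\,\i+p\,\j+q\,\k$:
$$\langle\A'\,\i,\A\rangle \,=\, -v'u + u'v + q'p - p'q \,=\, -(v'u - u'v - q'p + p'q).$$
Dividing by $\langle\A,\A\rangle=u^{2}+v^{2}+p^{2}+q^{2}$ yields exactly the claimed formula.

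The only real ``obstacle'' is sign bookkeeping in the products $\j\,\i$ and $\k\,\i$; once these are handled correctly, the lemma reduces to the three-line computation above. Note that the coprimality hypothesis on the real components of $\A$ is not used in the algebraic identity itself; it appears so that $\A$ belongs to $\widetilde{\hh[\xi]}$ and the denominator $\langle\A,\A\rangle$ is not obscured by a common polynomial factor, ensuring that the resulting rational function is the normalized component in the sense of \eqref{fou}.
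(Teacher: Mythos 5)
Your proposal is correct and follows essentially the same route as the paper: the same direct expansion of $\A'\i$ and its coefficient-wise scalar product with $\A$, combined with the two structural identities $\langle\A\,\i,\A\,\i\rangle=\langle\A,\A\rangle$ and $\langle\A'\i,\A\rangle=-\langle\A',\A\,\i\rangle$, which the paper justifies by noting that right-multiplication by a unit quaternion is an orthogonal transformation of $\rr^4\cong\hh$ while you derive them from the scalar-part formula and the move-across rule. The two justifications are equivalent, so there is nothing to add.
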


\begin{proof}
Multiplying $\A'$ by $\i$ on the right gives $\A'\i=-v'+u'\i+q'\j-p'\k$,
and hence
$$
\langle\A'\i,\A\rangle = -v'u+u'v+q'p-p'q=-(v'u-u'v-q'p+p'q) \,.
$$
The result \eqref{fourier} then follows directly from the fact that
multiplying by a unit quaternion amounts to an orthogonal transformation 
of $\rr^4\cong\hh$, and hence
\[
\langle\A\,\i,\A\,\i\rangle = \langle\A,\A\rangle
\qquad \mbox{and} \qquad
\langle\A'\i,\A\rangle = -\langle\A',\A\,\i\rangle \,. \qedhere
\]
\end{proof}

The preceding result motivates the following definition.

\begin{defi}
\label{indicator}
For a quaternion polynomial of the form \eqref{A} with coprime real 
components, the function specified by the normalized component of $\A'\i$ 
along $\A$, i.e., the real function
\begin{equation}
\label{Findicatrix}
\frac{\langle\A'\i,\A\rangle}{\langle\A,\A\rangle} 
\,=\, -\frac{v'u-u'v-q'p+p'q }{u^2+v^2+p^2+q^2} \,,
\end{equation}
will be called the \emph{rotation indicatrix} of $\A(\xi)$. From equation 
\eqref{omega1} it is evident that, to obtain an RMF, the rate of instantaneous 
rotation that must be applied to the ERF vectors $(\e_2,\e_3)$ of the PH 
curve defined by $\r'=\A\,\i\,\A^*$ is twice the rotation indicatrix of $\A$.
For notational purposes, we also define the rotation indicatrix of the 
polynomial $\A=0$ to be $0$.
\end{defi}

The RRMF curves can be characterized in terms of the rotation indicatrix
\eqref{Findicatrix} of the generating quaternion polynomial \eqref{A} as 
follows.

\begin{teo}
\label{Fourier}
For a PH curve $\mathbf{r} (\xi)$ generated by the quaternion polynomial 
\eqref{A} with coprime real components, the following statements are 
equivalent:
\begin{enumerate}
\item $\mathbf{r}(\xi)$ is an RRMF curve.
\item There exists a complex polynomial $\gamma(\xi)=a(\xi)+b(\xi)\,\i \in 
\cc[\xi]$ with coprime real components, such that $\A(\xi)$ and $\gamma(\xi)$ 
have the same rotation indicatrix, i.e., 
\begin{equation}
\label{chiave}
\frac{\langle \A'\i, \A\rangle}{\langle \A, \A\rangle} \equiv
\frac{\langle \gamma'\i, \gamma\rangle}{\langle \gamma, \gamma\rangle} .
\end{equation}
\end{enumerate}
\end{teo}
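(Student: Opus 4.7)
My plan is to deduce Theorem~\ref{Fourier} as essentially a geometric reformulation of Han's criterion (Theorem~\ref{Han}), once one observes that the right-hand side of \eqref{rrmfeqn} is, up to sign, the rotation indicatrix of the polynomial $\gamma(\xi)=a(\xi)+b(\xi)\,\i$. Both directions of the equivalence then reduce to the same bookkeeping identity.

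The first step I would carry out is a direct computation. Viewing $\gamma=a+b\,\i$ as the quaternion polynomial with $u=a$, $v=b$, and $p=q=0$, Lemma~\ref{indicatrix} gives
\[
\frac{\langle \gamma'\i,\gamma\rangle}{\langle \gamma,\gamma\rangle}
\,=\, -\,\frac{ab'-a'b}{a^2+b^2}.
\]
Since $v'u=uv'$ and $q'p=pq'$, the rotation indicatrix of $\A$ from \eqref{Findicatrix} equals $-(uv'-u'v-pq'+p'q)/(u^2+v^2+p^2+q^2)$. Consequently, the identity \eqref{chiave} holds if and only if
\[
\frac{uv'-u'v-pq'+p'q}{u^2+v^2+p^2+q^2}
\,\equiv\, \frac{ab'-a'b}{a^2+b^2},
\]
which is precisely Han's condition \eqref{rrmfeqn}.

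With this identification the implication $(1)\Rightarrow(2)$ follows by invoking Theorem~\ref{Han} to obtain coprime real polynomials $a,b$ satisfying \eqref{rrmfeqn}, setting $\gamma:=a+b\,\i\in\widetilde{\cc[\xi]}$, and reading \eqref{chiave} off the displayed equivalence. For the converse $(2)\Rightarrow(1)$, I would take \eqref{chiave} with $\gamma=a+b\,\i$ of coprime real components, translate it back into \eqref{rrmfeqn} by the same computation, and appeal to Theorem~\ref{Han} to conclude that $\r(\xi)$ is RRMF. I do not anticipate any substantive obstacle: the theorem's novelty is conceptual rather than technical, repackaging Han's identity as a single statement about the oriented projection in $\hh\cong\rr^4$ described by \eqref{fou}. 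The only items requiring care are the sign arising from right-multiplication by $\i$ (already absorbed in Lemma~\ref{indicatrix}) and the matching of Han's coprimality hypothesis on $a,b$ with the definition of $\widetilde{\cc[\xi]}$, both of which align exactly.
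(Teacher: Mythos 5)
Your proposal is correct and follows exactly the paper's own argument: the paper's proof is the one-line observation that Theorem~\ref{Fourier} is a restatement of Theorem~\ref{Han} obtained by applying Lemma~\ref{indicatrix} to both $\A(\xi)$ and $\gamma(\xi)$, which is precisely the computation you spell out (including the correct sign bookkeeping). No issues.
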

\begin{proof}
The proof is just a restatement of Theorem~\ref{Han}, obtained by applying 
Lemma~\ref{indicatrix} to $\A(\xi)$ and $\gamma(\xi)$.
\end{proof}

Some key properties of the rotation indicatrices of quaternion polynomials 
are now derived. The first result expresses the rotation  indicatrix of a 
product of two quaternion polynomials in terms of the rotation indicatrices
of the individual polynomials and their components.

\begin{pro}
\label{formulaprodotto}
If $\A(\xi)$, $\B(\xi)$ are quaternion polynomials with coprime real 
components and $\alpha(\xi)$, $\beta(\xi)$ are the complex polynomials 
such that $\A(\xi)=\alpha(\xi)+\beta(\xi)\,\j$, the rotation indicatrix 
of the product $\B\A$ has the form
\[
\frac{\langle (\B\A)'\i, \B \A \rangle}{|\B\A|^2} =
\frac{|\alpha|^2-|\beta|^2}{|\alpha|^2+|\beta|^2}\,
\frac{\langle\B'\i,\B\rangle}{|\B|^2} -
\frac{2|\alpha||\beta|}{|\alpha|^2+|\beta|^2}\,
\frac{\langle\B'\frac{\alpha\beta}{|\alpha\beta|}\k,\B\rangle}{|\B|^2} + 
\frac{\langle\A'\i,\A\rangle}{|\A|^2} \,.
\]
\end{pro}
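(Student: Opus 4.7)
The plan is to apply the Leibniz rule $(\B\A)' = \B'\A + \B\A'$, multiply by $\i$ on the right, and split the numerator of the rotation indicatrix of $\B\A$ as
\[
\langle (\B\A)'\i,\,\B\A\rangle \,=\, \langle \B'\A\,\i,\,\B\A\rangle \,+\, \langle \B\A'\i,\,\B\A\rangle .
\]
After dividing by $|\B\A|^2 = |\B|^2(|\alpha|^2+|\beta|^2)$, the second piece will match the last term of the claimed formula, and the first piece will produce the remaining two terms.

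For the second piece I would use the fact that \emph{left} multiplication by any quaternion $\B$ is conformal on $\hh\cong\rr^4$: since $\Re(ab)=\Re(ba)$ for any two quaternions (so $\Re$ is cyclic on triple products), one has
\[
\langle \B\X,\B\Y\rangle \,=\, \Re(\B\X\Y^*\B^*) \,=\, \Re(\B^*\B\X\Y^*) \,=\, |\B|^2\,\langle \X,\Y\rangle .
\]
Applied with $\X=\A'\i$ and $\Y=\A$ and then divided by $|\B\A|^2$, this immediately delivers the term $\langle \A'\i,\A\rangle/|\A|^2$.

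The first piece $\langle \B'\A\,\i,\,\B\A\rangle$ is more delicate, because the factors on the right of $\B'$ and of $\B$ are different. The trick is to concentrate all the $\A$-dependence into one block via cyclicity of $\Re$:
\[
\langle \B'\A\,\i,\,\B\A\rangle \,=\, \Re\bigl(\B'\A\,\i\,\A^*\B^*\bigr) \,=\, \Re\bigl(\B^*\B'\cdot\A\,\i\,\A^*\bigr),
\]
and then to evaluate $\A\,\i\,\A^*$ in terms of $\alpha,\beta$. Using $\A=\alpha+\beta\j$ together with the commutation rules $\i z = z\,\i$ and $\k z = \bar z\,\k$ for $z\in\cc$, a direct expansion yields the key identity
\[
\A\,\i\,\A^* \,=\, (|\alpha|^2-|\beta|^2)\,\i \,-\, 2\,\alpha\beta\,\k ,
\]
which is just the hodograph formula \eqref{hodo} recast in complex notation. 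Substituting this into $\Re(\B^*\B'\cdot\A\,\i\,\A^*)$ and using $\Re(\B^*\B'\,\i)=\langle\B'\i,\B\rangle$ and $\Re(\B^*\B'\,\alpha\beta\,\k)=\langle\B'\alpha\beta\,\k,\B\rangle$ gives
\[
\langle \B'\A\,\i,\,\B\A\rangle \,=\, (|\alpha|^2-|\beta|^2)\,\langle\B'\i,\B\rangle \,-\, 2\,\langle\B'\alpha\beta\,\k,\B\rangle .
\]
Finally, pulling out the real scalar factor $|\alpha\beta|=|\alpha||\beta|$ from the second term replaces $\alpha\beta$ by the unit complex quantity $\alpha\beta/|\alpha\beta|$ inside the (real-bilinear) scalar product, and dividing through by $|\B\A|^2=|\B|^2(|\alpha|^2+|\beta|^2)$ produces exactly the first two terms of the stated formula.

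The only non-routine step is the identity for $\A\,\i\,\A^*$ in complex notation; everything else is bookkeeping with the cyclic property of $\Re$ and the conformality of left multiplication on $\hh$. I expect no serious obstacle beyond the careful tracking of signs when expanding $(\alpha+\beta\j)\,\i\,(\alpha^*-\beta\j)$.
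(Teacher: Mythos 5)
Your proposal is correct, and it follows the same overall skeleton as the paper's proof: apply the Leibniz rule, split $\langle(\B\A)'\i,\B\A\rangle$ into $\langle\B'\A\i,\B\A\rangle+\langle\B\A'\i,\B\A\rangle$, and observe that left multiplication by $\B$ scales the inner product by $|\B|^2$, so the second piece yields the term $\langle\A'\i,\A\rangle/|\A|^2$. Where you genuinely diverge is in the treatment of the cross term $\langle\B'\A\i,\B\A\rangle$. The paper expands $\A=\alpha+\beta\,\j$ inside both slots of the inner product, obtains four bilinear pieces, and reduces each one by repeatedly exploiting that multiplication by a unit quaternion is an orthogonal transformation of $\rr^4$ (together with the relations $\alpha\,\j=\j\,\alpha^*$, $\beta\,\j=\j\,\beta^*$); two of the four cross terms then combine into the single $\k$--term. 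You instead use the trace identity $\langle\X,\Y\rangle=\Re(\Y^*\X)$ and cyclicity of $\Re$ to collapse the cross term into $\Re\bigl(\B^*\B'\cdot\A\,\i\,\A^*\bigr)$, and then invoke the hodograph formula in complex form, $\A\,\i\,\A^*=(|\alpha|^2-|\beta|^2)\,\i-2\,\alpha\beta\,\k$, which I have checked is correct (it is exactly \eqref{hodo} rewritten). This is shorter and more conceptual: it makes transparent why the coefficients $|\alpha|^2-|\beta|^2$ and $2|\alpha||\beta|$ and the direction $\frac{\alpha\beta}{|\alpha\beta|}\k$ appear --- they are just the components of the hodograph of $\A$ --- whereas the paper's four-term expansion obtains them by explicit bookkeeping. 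The only caveat, shared by the statement itself and by both proofs, is that the normalized quantity $\alpha\beta/|\alpha\beta|$ is only defined off the zeros of $\alpha\beta$; your argument, like the paper's, is really establishing the identity in the un-normalized form $\langle\B'\A\i,\B\A\rangle=(|\alpha|^2-|\beta|^2)\langle\B'\i,\B\rangle-2\langle\B'\alpha\beta\,\k,\B\rangle$ before dividing, which is fine.
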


\begin{proof}
A direct computation shows that
\begin{eqnarray*}
\frac{\langle (\B\A)'\i, \B \A \rangle}{|\B\A|^2} %\\
\!\! &=& \!\! \frac{\langle \B' \A\i, \B \A \rangle}{|\B\A|^2} +
\frac{\langle \B\A'\i, \B \A \rangle}{|\B\A|^2} \\
\!\! &=& \!\! \frac{\langle \B' \A\i, \B \A \rangle}{|\B|^2|\A|^2} +
\frac{\langle |\B|\frac{\B}{|\B|}\A'\i, |\B|\frac{\B}{|\B|} \A \rangle}
{|\B|^2|\A|^2} \\
\!\! &=& \!\! \frac{\langle \B' \A\i, \B \A \rangle}{|\B|^2|\A|^2} +
\frac{|\B|^2\langle \frac{\B}{|\B|}\A'\i, \frac{\B}{|\B|} \A \rangle}
{|\B|^2|\A|^2} \\
\!\! &=& \!\! \frac{\langle \B' \A\i, \B \A \rangle}{|\B|^2|\A|^2} +
\frac{\langle \frac{\B}{|\B|}\A'\i, \frac{\B}{|\B|} \A \rangle}{|\A|^2}.
\end{eqnarray*}
Now since multiplication by a unit quaternion corresponds to an orthogonal 
transformation of $\rr^4\cong\hh$, we have
\begin{equation}
\label{equazione generale}
\frac{\langle(\B\A)'\i,\B\A\rangle}{|\B\A|^2} = 
\frac{\langle\B'\A\i,\B\A\rangle}{|\B|^2|\A|^2}  
+ \frac{\langle\A'\i,\A \rangle}{|\A|^2} \,.
\end{equation}
Writing $\A(\xi)=\alpha(\xi)+\beta(\xi)\,\j$, we obtain
\begin{eqnarray*}
&& \frac{\langle \B'\A\i,  \B\A \rangle}{|\B|^2|\A|^2} 
= \frac{\langle\B'(\alpha+\beta\j)\i,\B(\alpha+\beta\j)\rangle}{|\B|^2|\A|^2} 
= \frac{\langle\B'\i(\alpha-\beta\j),\B(\alpha+\beta\j)\rangle}{|\B|^2|\A|^2} \\
&&= \frac{\langle\B'\i\alpha,\B\alpha\rangle}{|\B|^2|\A|^2}
- \frac{\langle\B'\i\beta\j,\B\alpha\rangle}{|\B|^2|\A|^2} 
+ \frac{\langle\B'\i\alpha,\B\beta\j\rangle}{|\B|^2|\A|^2} 
- \frac{\langle\B'\i\beta\j,\B\beta\j\rangle}{|\B|^2|\A|^2} \\
&&= |\alpha|^2\frac{\langle\B'\i\frac{\alpha}{|\alpha|},  
\B\frac{\alpha}{|\alpha|}\rangle}{|\B|^2|\A|^2}
- |\alpha||\beta|\frac{\langle\B'\i\frac{\beta }{|\beta|}\j,  
\B\frac{\alpha}{|\alpha|}\rangle}{|\B|^2|\A|^2} \\
&& +\; |\alpha||\beta|\frac{\langle \B'\i\frac{\alpha}{|\alpha|},  
\B\frac{\beta}{|\beta|}\j\rangle}{|\B|^2|\A|^2} 
- |\beta|^2\frac{\langle\B'\i\frac{\beta\j}{|\beta|},  
\B\frac{\beta \j}{|\beta|} \rangle}{|\B|^2|\A|^2} \\
&&= \frac{|\alpha|^2-|\beta|^2}{|\alpha|^2+|\beta|^2}
\frac{\langle\B'\i,\B\rangle}{|\B|^2}
+ \frac{|\alpha||\beta|}{|\alpha|^2+|\beta|^2}
\frac{\langle\B'\i\frac{\beta}{|\beta|},\B\frac{\alpha}{|\alpha|}\j\rangle
+ \langle\B'\i\frac{\alpha}{|\alpha|},\B\frac{\beta }{|\beta|}\j\rangle}
{|\B|^2} \,.
\end{eqnarray*}
Note that $\frac{\alpha}{|\alpha|}$ and $\frac{\beta}{|\beta|}\j$ are unit 
quaternions, so their inverses are simply their conjugates. Multiplying by 
unit quaternions, and noting that $\alpha\,\j=\j\,\alpha^*$ and $\beta\,\j 
=\j\,\beta^*$, we have
\[
\langle\B'\i\frac{ \beta }{|\beta|},\B\frac{\alpha}{|\alpha|}\j\rangle 
= -\langle\B'\i\frac{\beta}{|\beta|}\j,\B\frac{\alpha}{|\alpha|}\rangle 
= -\langle\B'\k\frac{\beta^*}{|\beta|}\frac{\alpha^*}{|\alpha|},\B\rangle 
= -\langle\B'\frac{\beta}{|\beta|}\frac{\alpha}{|\alpha|}\k,\B\rangle \,,
\]
\[
\langle\B'\i\frac{\alpha}{|\alpha|},\B\frac{\beta}{|\beta|}\j \rangle
= -\langle\B'\i\frac{\alpha}{|\alpha|}\j,\B\frac{\beta }{|\beta|} \rangle
= -\langle\B'\k\frac{\alpha^*}{|\alpha|}\frac{\beta^*}{|\beta|},\B\rangle 
= -\langle\B'\frac{\alpha}{|\alpha|}\frac{ \beta }{|\beta|}\k,\B\rangle \,.
\]
Hence, we have shown that
\begin{eqnarray*}
\frac{\langle\B'\A\i,\B\A\rangle}{|\B|^2|\A|^2} \,=\,
\frac{|\alpha|^2-|\beta|^2}{|\alpha|^2+|\beta|^2}\,
\frac{\langle\B'\i,\B\rangle}{|\B|^2} \,-\, 
\frac{2|\alpha||\beta|}{|\alpha|^2+|\beta|^2}\,
\frac{\langle\B'\frac{\alpha\beta}{|\alpha\beta|}\k,\B\rangle}{|\B|^2} \,.
\end{eqnarray*}
The result follows directly from this last equality and equation 
\eqref{equazione generale}.
\end{proof}

The following result\footnote{This result was previously stated, in 
somewhat different terms, in Lemma~2.1 of \cite{farouki13b}.} expresses 
the rotation indicatrix of the product of a quaternion polynomial and a 
complex polynomial in terms of their individual rotation indicatrices.

\begin{cor}
\label{corollario}
For a given complex polynomial $\alpha(\xi)\in\widetilde{\cc[\xi]}$ and 
quaternion polynomial $\B(\xi)\in\widetilde{\hh[\xi]}$, the rotation 
indicatrix of the product $\B\alpha$ is the sum of the rotation indicatrices 
of the polynomials $\B$ and $\alpha$, i.e.,
$$
\frac{\langle(\B\alpha)'\i,\B\alpha\rangle}{|\B\alpha|^2} \equiv
\frac{\langle\B'\i,\B\rangle}{|\B|^2} +
\frac{\langle\alpha'\i,\alpha\rangle}{|\alpha|^2}.
$$
\end{cor}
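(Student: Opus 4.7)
The plan is to derive this as a direct specialization of Proposition \ref{formulaprodotto} (or, equivalently, from the intermediate identity \eqref{equazione generale} established in its proof) to the case where the right-hand factor is purely complex. Writing $\A(\xi)=\alpha(\xi)$ in the notation of that proposition corresponds to taking the ``$\beta$--component'' equal to $0$, so the middle term, whose coefficient $2|\alpha||\beta|/(|\alpha|^2+|\beta|^2)$ vanishes, disappears, and the first coefficient $(|\alpha|^2-|\beta|^2)/(|\alpha|^2+|\beta|^2)$ collapses to $1$. Since applying the formula \emph{verbatim} at $\beta=0$ requires dividing by $|\alpha\beta|$, I would actually prefer to rerun the short argument at this special case to avoid the spurious $0/0$, rather than invoking the proposition as a black box.

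Concretely, I would start from the identity
\begin{equation*}
\frac{\langle(\B\alpha)'\i,\B\alpha\rangle}{|\B\alpha|^2}
=\frac{\langle\B'\alpha\i,\B\alpha\rangle}{|\B|^2|\alpha|^2}
+\frac{\langle\alpha'\i,\alpha\rangle}{|\alpha|^2},
\end{equation*}
which is exactly \eqref{equazione generale} with $\A$ replaced by $\alpha$, and was obtained by the Leibniz rule together with the fact that left multiplication by the unit quaternion $\B/|\B|$ is an isometry of $\hh\cong\rr^4$. The second summand is already the desired rotation indicatrix of $\alpha$, so it remains to simplify the first summand to $\langle\B'\i,\B\rangle/|\B|^2$.

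The key observation is that $\alpha(\xi)=a(\xi)+b(\xi)\,\i$ lies in the commutative subalgebra $\cc[\xi]\subset\hh[\xi]$, so $\alpha\i=\i\alpha$; in particular, $\B'\alpha\i=\B'\i\alpha$. Moreover, $\alpha/|\alpha|$ is a unit quaternion, and right multiplication by a unit quaternion is an orthogonal transformation of $\hh$, so
\begin{equation*}
\frac{\langle\B'\i\alpha,\B\alpha\rangle}{|\B|^2|\alpha|^2}
=\frac{\langle\B'\i\,(\alpha/|\alpha|),\B\,(\alpha/|\alpha|)\rangle}{|\B|^2}
=\frac{\langle\B'\i,\B\rangle}{|\B|^2}.
\end{equation*}
Substituting this back yields exactly the additive formula in the statement. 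The proof is essentially mechanical, and no substantive obstacle is anticipated; the only minor subtlety is the one just noted, namely that one should not quote Proposition \ref{formulaprodotto} directly at $\beta\equiv 0$ but instead reuse the generic identity \eqref{equazione generale}, which remains well defined in this degenerate case.
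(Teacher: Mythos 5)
Your proposal is correct and takes essentially the same route as the paper, which simply declares the corollary ``a direct consequence of Proposition~\ref{formulaprodotto}'' (i.e., the case $\beta\equiv 0$). Your extra care in rerunning the argument from the identity \eqref{equazione generale} --- using $\alpha\i=\i\alpha$ and the orthogonality of right multiplication by $\alpha/|\alpha|$ --- neatly sidesteps the formally undefined factor $\alpha\beta/|\alpha\beta|$ at $\beta=0$, a point the paper glosses over.
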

\begin{proof}
The proof is a direct consequence of Proposition~\ref{formulaprodotto}. 
\end{proof}

Before proceeding, we mention one further consequence of Corollary~\ref
{corollario}.

\begin{cor}\label{bar}
For any $\delta(\xi)\in\widetilde{\cc[\xi]}$ the rotation indicatrices of
$\delta(\xi)$ and $\delta^*(\xi)$ differ only in sign, i.e.,
\[
 \frac{\langle{\delta^*}'\i,\delta^*\rangle}{|\delta^*|^2}
= -\frac{\langle\delta'\i,\delta\rangle}{|\delta|^2} \,.
\]
\end{cor}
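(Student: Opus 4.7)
The plan is to apply Corollary \ref{corollario} to the product $\delta^*\delta=|\delta|^2$, exploiting the fact that this is a nonzero real polynomial and that real polynomials have vanishing rotation indicatrix. First I observe that the complex conjugate $\delta^*(\xi)=a(\xi)-b(\xi)\,\i$, regarded as a quaternion polynomial with vanishing $\j$- and $\k$-components, belongs to $\widetilde{\hh[\xi]}$: its real components are $(a,-b,0,0)$ and their monic gcd in $\rr[\xi]$ equals $\gcd_\rr(a,-b,0,0)=\gcd_\rr(a,b)=1$, since $\delta\in\widetilde{\cc[\xi]}$ by hypothesis.

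With this identification in place, I set $\B:=\delta^*\in\widetilde{\hh[\xi]}$ and $\alpha:=\delta\in\widetilde{\cc[\xi]}$ in Corollary \ref{corollario}, which yields
\[
\frac{\langle(\delta^*\delta)'\i,\delta^*\delta\rangle}{|\delta^*\delta|^2} \,=\, \frac{\langle{\delta^*}'\i,\delta^*\rangle}{|\delta^*|^2} \,+\, \frac{\langle\delta'\i,\delta\rangle}{|\delta|^2}.
\]
The left-hand side is then computed directly from formula \eqref{Findicatrix}: since $\delta^*\delta=|\delta|^2$ is a real polynomial, viewed in $\hh[\xi]$ it has $u=|\delta|^2$ and $v=p=q\equiv0$, so every term in the numerator of \eqref{Findicatrix} vanishes identically. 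Rearranging the displayed equation immediately produces the asserted identity.

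There is essentially no substantial obstacle here: the argument reduces to two short observations, namely additivity of the rotation indicatrix on products of the form $\B\alpha$ with $\alpha$ complex (Corollary \ref{corollario}), and the vanishing of the rotation indicatrix on real polynomials (direct inspection of \eqref{Findicatrix}). The only minor technical point is the admissibility check for $\delta^*$ as an element of $\widetilde{\hh[\xi]}$, which is immediate from the coprimality assumption on the real components of $\delta$.
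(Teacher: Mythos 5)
Your proof is correct and follows essentially the same route as the paper: both apply Corollary~\ref{corollario} to the product $\delta\delta^*=|\delta|^2$ (you merely swap which factor plays the role of $\B$ and which plays $\alpha$, which is immaterial since complex polynomials commute) and then observe that the real polynomial $|\delta|^2$ has vanishing rotation indicatrix. Your extra remarks on the admissibility of $\delta^*$ as an element of $\widetilde{\hh[\xi]}$ are a harmless elaboration of what the paper leaves implicit.
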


\begin{proof} From Corollary~\ref{corollario} with $\B=\delta$ and $\alpha=
\delta^*$ we have
\[
0 = \frac{\langle(|\delta|^2)'\i,|\delta|^2\rangle}{|\delta|^4} = 
\frac{\langle (\delta\delta^*)'\i,\delta\delta^*\rangle}{|\delta|^2|\delta|^2} 
= \frac{\langle\delta'\i,\delta\rangle}{|\delta|^2} + 
\frac{\langle{\delta^*}'\i,\delta^*\rangle}{|\delta^*|^2} . \qedhere
\]
\end{proof}

\section{Reduction to vanishing indicatrix case}
\label{sec:reduction}

In this section, the study of the set of quaternion polynomials that 
generate RRMF curves is reduced to the study of the set of quaternion 
polynomials whose rotation indicatrix is identically zero.

\begin{defi}
For any $\gamma(\xi)\in \widetilde{\cc[\xi]}$, let
$$
\mathscr F_\gamma = 
\left\{ \A \in \widetilde{\hh[\xi]} \,:\, 
\frac{\langle \A'\i, \A \rangle}{|\A|^2} =
\frac{\langle \gamma'\i, \gamma \rangle}{|\gamma|^2}\right\}
$$
be the set of quaternion polynomials whose rotation indicatrix coincides
with that of $\gamma$. Moreover, let
$$
\mathscr F_0 = \left\{ \A \in \widetilde{\hh[\xi]} \,:\, 
\langle \A'\i, \A \rangle=0\right\}
$$
be the set of quaternion polynomials with vanishing rotation indicatrix.
\end{defi}

The following definition will be useful in the study of $\mathscr F_\gamma$.

\begin{defi}
For any quaternion polynomials $\A_1(\xi),\ldots,\A_n(\xi) \in \hh[\xi]$, their
\emph{greatest common right divisor} is defined as the unique monic 
polynomial $\C(\xi)\in \hh[\xi]$ having the following properties:
\bi
\item $\C(\xi)$ divides $\A_1(\xi),\ldots,\A_n(\xi)$ on the right
\item if $\B(\xi)$ divides $\A_1(\xi),\ldots,\A_n(\xi)$ on the right, then 
$\B(\xi)$ divides $\C(\xi)$ on the right.
\ei
The polynomial $\C$ will be denoted by $\gcd_\hh(\A_1,\ldots,\A_n)$.
\end{defi}

While the definition is well--posed in general (see \cite{damiano10} and 
references therein), we will only use it in the following special case.

\begin{oss}
If $\A(\xi)=\alpha(\xi)+\beta(\xi)\,\j=\alpha(\xi)+\j\,\beta^*(\xi)$ and if 
$\gamma(\xi) \in \cc[\xi]$, then 
$\gcd_\hh(\A,\gamma)=\gcd_\cc(\alpha,\beta^*,\gamma)$. 
\end{oss}

The next result reduces the study of the set $\mathscr F_\gamma$ to the 
study of $\mathscr F_0$. 

\begin{teo} 
\label{finale}
For any complex $\gamma(\xi)\in\widetilde{\cc[\xi]}$, we have
\[
\mathscr{F}_0{\gamma} \cap \widetilde{\hh[\xi]} 
\subseteq \mathscr{F}_\gamma \,.
\]
Moreover, 
\[
\mathscr{F}_\gamma = \left\{\A \in \widetilde{\hh[\xi]} \,:\, 
\A\,\gamma^*\,|\textstyle{\gcd_\hh}(\A,\gamma)|^{-2} \in \mathscr{F}_0\right\} .
\]
\end{teo}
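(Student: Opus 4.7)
The plan is to dispatch the first inclusion directly from the additivity of the rotation indicatrix (Corollary~\ref{corollario}), and to establish the full equality by reducing through the greatest common right divisor $\delta = \gcd_\hh(\A,\gamma)$. For the inclusion $\mathscr{F}_0\gamma \cap \widetilde{\hh[\xi]} \subseteq \mathscr{F}_\gamma$, any element on the left writes as $\A = \B\gamma$ with $\B \in \mathscr{F}_0$, and Corollary~\ref{corollario} immediately yields
\[
\frac{\langle\A'\i,\A\rangle}{|\A|^2} = \frac{\langle\B'\i,\B\rangle}{|\B|^2} + \frac{\langle\gamma'\i,\gamma\rangle}{|\gamma|^2} = \frac{\langle\gamma'\i,\gamma\rangle}{|\gamma|^2},
\]
placing $\A$ in $\mathscr{F}_\gamma$.

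For the equality, given $\A \in \widetilde{\hh[\xi]}$ I would set $\delta = \gcd_\hh(\A,\gamma) \in \cc[\xi]$ by the Remark, write $\A = \A_0\delta$ and $\gamma = \gamma_0\delta$, and observe that $\gcd_\hh(\A_0,\gamma_0) = 1$, so $\gcd_\cc(\alpha_0,\beta_0^*,\gamma_0) = 1$ where $\A_0 = \alpha_0 + \beta_0\j$. Commutativity in $\cc[\xi]$ together with $\delta\delta^* = |\delta|^2 \in \rr[\xi]$ yields $\A\gamma^*|\delta|^{-2} = \A_0\gamma_0^*$. Combining Corollaries~\ref{corollario} and~\ref{bar}, one computes
\[
\frac{\langle\A'\i,\A\rangle}{|\A|^2} - \frac{\langle\gamma'\i,\gamma\rangle}{|\gamma|^2} = \frac{\langle\A_0'\i,\A_0\rangle}{|\A_0|^2} - \frac{\langle\gamma_0'\i,\gamma_0\rangle}{|\gamma_0|^2} = \frac{\langle(\A_0\gamma_0^*)'\i,\A_0\gamma_0^*\rangle}{|\A_0\gamma_0^*|^2},
\]
so the indicatrix of $\A$ matches that of $\gamma$ if and only if $\A_0\gamma_0^*$ has vanishing rotation indicatrix.

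The main obstacle is verifying that $\A_0\gamma_0^* \in \widetilde{\hh[\xi]}$, as required for membership in $\mathscr{F}_0$. I would argue by contradiction: if a nontrivial $h \in \rr[\xi]$ divided both complex components $\alpha_0\gamma_0^*$ and $\beta_0\gamma_0$ of $\A_0\gamma_0^*$, then conjugating (since $h$ is real) $h$ would also divide $\alpha_0^*\gamma_0$ and $\beta_0^*\gamma_0^*$. For each root $z \in \cc$ of $h$, paired with $\bar z$, I would enumerate the vanishing pattern of $\alpha_0,\beta_0,\gamma_0$ at $\{z,\bar z\}$ and rule out every case using three facts: (i) $\A_0 \in \widetilde{\hh[\xi]}$, which follows from $\A \in \widetilde{\hh[\xi]}$ via $\alpha = \alpha_0\delta$, $\beta = \beta_0\delta^*$, excluding the simultaneous vanishing of $\alpha_0$ and $\beta_0$ on $\{z,\bar z\}$; (ii) $\gamma \in \widetilde{\cc[\xi]}$, so $\gamma_0$ inherits the property of having no real polynomial factor, which excludes both $\gamma_0(z)=\gamma_0(\bar z)=0$ and real roots of $h$; and (iii) $\gcd_\cc(\alpha_0,\beta_0^*,\gamma_0)=1$, which excludes the asymmetric case where $\gamma_0$ vanishes at exactly one of $z,\bar z$ (this would force $\alpha_0(z)=\beta_0^*(z)=\gamma_0(z)=0$). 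The exhaustion of cases forces $h$ to be constant, after which the indicatrix computation gives the equivalence.
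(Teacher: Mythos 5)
Your proof is correct and follows essentially the same route as the paper's: both reduce the equality to the additivity of the rotation indicatrix (Corollaries~\ref{corollario} and~\ref{bar}) applied to $\A\,\gamma^*\,|\gcd_\hh(\A,\gamma)|^{-2}=\A_0\gamma_0^*$. The only substantive difference is that you supply a root--pattern argument for the coprimality of the real components of $\A_0\gamma_0^*$ --- a point the paper asserts in a single sentence without proof --- and your case analysis (resting on $\A_0\in\widetilde{\hh[\xi]}$, $\gamma_0\in\widetilde{\cc[\xi]}$, and $\gcd_\cc(\alpha_0,\beta_0^*,\gamma_0)=1$) does check out.
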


\begin{proof}
If $\B\in \mathscr F_0$, then 
\[
\frac{\langle \B'\i, \B \rangle}{|\B|^2}=0
\]
by the definition of $\mathscr F_0$, and from Corollary~\ref{corollario} 
we have 
\[
\frac{\langle (\B\gamma)'\i, (\B \gamma) \rangle}{|\B\gamma|^2}=  
\frac{\langle \gamma'\i, \gamma \rangle}{|\gamma|^2}.
\]
Hence, $\B{\gamma} \in \mathscr F_\gamma$ provided the real components of 
$\B{\gamma}$ are still coprime.

Consider now the second statement. If $\A \in \mathscr{F}_\gamma$, then 
by Corollaries~\ref{corollario} and \ref{bar}, the rotation  indicatrix of 
$\C:=\A\gamma^*$ vanishes identically. The greatest real common divisor of 
the real components of $\C$ is $|\gcd_\hh(\A,\gamma)|^{2}$. Setting $\B:= 
\C\,|\gcd_\hh(\A,\gamma)|^{-2}$, we have $\B\in \mathscr{F}_0$ since the real 
components of $\B$ are coprime and the rotation indicatrix of $\B$ vanishes 
identically by Corollary~\ref{corollario}. Conversely, if $\A \in \widetilde
{\hh[\xi]}$ and $\B:=\A\,\gamma^*\,|\gcd_\hh(\A,\gamma)|^{-2}$ belongs to 
$\mathscr{F}_0$, then 
\[
\A = \B\,{\gamma^*}^{-1}|\textstyle{\gcd_\hh}(\A,\gamma)|^{2} = 
\B\,\gamma\,\displaystyle\frac{\left|\gcd_\hh(\A,\gamma)\right|^{2}}{|\gamma|^2}
\]
has the same rotation indicatrix as $\gamma$ by Corollary~\ref{corollario}. 
Hence, $\A \in \mathscr{F}_\gamma$.
\end{proof}

Recall that a polynomial $\A(\xi)\in\mathscr{F}_0$ generates an RRMF curve 
for which the Euler--Rodrigues frame \eqref{erf} is rotation--minimizing.
The next result sheds some light on the expression \eqref{rmf} for the
rational rotation--minimizing frame. 

\begin{pro}
\label{prop:great}
A quaternion polynomial $\A\in\mathscr{F}_\gamma$ generates a PH curve with 
a rational RMF that coincides with the Euler--Rodrigues frame \eqref{erf} of 
the curve generated by the polynomial
\[
\A\,\gamma^*\,|\textstyle{\gcd_\hh}(\A,\gamma)|^{-2} \,,
\]
which belongs to $\mathscr{F}_0$.
\end{pro}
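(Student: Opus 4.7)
The plan is as follows. The proposition packages two assertions: (i) that $\B:=\A\gamma^*|\gcd_\hh(\A,\gamma)|^{-2}$ belongs to $\mathscr{F}_0$; and (ii) that the rational RMF of the PH curve generated by $\A$ coincides with the Euler--Rodrigues frame of the (distinct) PH curve generated by $\B$. Assertion (i) is not new — it is exactly the second part of Theorem~\ref{finale} applied to $\A\in\mathscr{F}_\gamma$ — so I would simply cite that result.

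For assertion (ii), the first step is to reinterpret formula \eqref{rmf} as an ERF formula. Since $\A\in\mathscr{F}_\gamma$, Theorem~\ref{Fourier} guarantees that writing $\gamma=a+b\,\i$ with coprime real components $a,b$ reproduces Han's RRMF condition \eqref{rrmfeqn}. Hence \eqref{rmf} applies with this choice, and setting $\B_0:=\A(a-b\,\i)=\A\gamma^*$, it gives the rational RMF of the curve generated by $\A$ as
\[
(\f_1,\f_2,\f_3) \,=\, \frac{(\B_0\,\i\,\B_0^*,\,\B_0\,\j\,\B_0^*,\,\B_0\,\k\,\B_0^*)}{|\B_0|^2},
\]
which is precisely the expression \eqref{erf} defining the ERF of the PH curve generated by $\B_0=\A\gamma^*$.

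The remaining step is to transfer this ERF from $\A\gamma^*$ to $\B=\A\gamma^*|\gcd_\hh(\A,\gamma)|^{-2}$. The key observation is that the ERF formula \eqref{erf} is invariant under multiplication of its pre--image by any nonzero real polynomial $\lambda(\xi)$: both $(\lambda\B_0)\,\mathbf{w}\,(\lambda\B_0)^*$ and $|\lambda\B_0|^2$ pick up a common factor of $\lambda^2$ for each $\mathbf{w}\in\{\i,\j,\k\}$, so their quotient is unchanged. Applying this with $\lambda=|\gcd_\hh(\A,\gamma)|^{-2}$, combined with the previous step, gives assertion (ii).

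The only real obstacle is the bookkeeping that $\B$ is genuinely a quaternion polynomial, so that the scaling $\A\gamma^*\mapsto\B$ is legitimate as a transformation between pre--images of actual PH curves. This is straightforward: writing $\delta:=\gcd_\hh(\A,\gamma)$, $\A=\A_1\delta$ in $\hh[\xi]$, and $\gamma=\delta\gamma_1$ in $\cc[\xi]$, one computes $\A\gamma^*=\A_1\,\delta\delta^*\,\gamma_1^*=\A_1\,|\delta|^2\,\gamma_1^*$, so that $\B=\A_1\gamma_1^*\in\hh[\xi]$. Apart from this bookkeeping — essentially already carried out inside the proof of Theorem~\ref{finale} — the substance of the proposition is just the reinterpretation of \eqref{rmf} as an ERF, combined with the real--scalar invariance of \eqref{erf}.
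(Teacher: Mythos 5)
Your proposal is correct and follows essentially the same route as the paper's own proof: citing Theorem~\ref{finale} for membership in $\mathscr{F}_0$, recognizing that formula \eqref{rmf} with $\B=\A\gamma^*$ is literally the ERF expression \eqref{erf} for the curve generated by $\A\gamma^*$, and noting that the real factor $|\gcd_\hh(\A,\gamma)|^{-2}$ cancels in the ERF quotient. Your version merely spells out the bookkeeping (via Theorem~\ref{Fourier} and the polynomiality of $\A\gamma^*|\gcd_\hh(\A,\gamma)|^{-2}$) that the paper leaves implicit.
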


\begin{proof}
The rational RMF of the curve generated by $\A\,\gamma^*\,|\gcd_\hh(\A,
\gamma)|^{-2}$ is its Euler--Rodrigues frame. Since $|\gcd_\hh(\A,\gamma)|$ 
cancels out in the expression of the ERF, this frame coincides with the 
expression~\eqref{rmf} for the rational RMF of the curve generated by 
$\A(\xi)$, where $\B(\xi):=\A(\xi)\gamma^*(\xi)$.
\end{proof}

Theorem~\ref{finale} permits a complete characterization of the set of 
quaternion polynomials that generate RRMF curves, which can be specialized 
to the case of curves with primitive hodographs (see Definition~\ref
{primitive}). The following definition and remark will be useful in
formulating this characterization.

\begin{defi}
\label{core}
For $\A(\xi) \in \hh[\xi]$, let $\alpha(\xi),\beta(\xi) \in \cc[\xi]$ be 
such that $\A(\xi)=\alpha(\xi)+\beta(\xi)\,\j$ and let 
\[
\chi(\xi)=\textstyle{\gcd_\cc}(\alpha(\xi),\beta^*(\xi)) \,,
\] 
i.e., $\chi$ is the highest--degree monic complex polynomial that divides 
$\A$ on the right. Then the polynomial $\A(\xi)\chi(\xi)^{-1}$ is called the 
\emph{core} of $\A(\xi)$.
\end{defi}

\begin{oss}
The core of $\A(\xi)$ coincides with $\A(\xi)$ if and only if $\r'(\xi)=
\A(\xi)\,\i\,\A^*(\xi)$ is a primitive hodograph.
\end{oss}

The promised characterization of the complete space of RRMF curves is 
formulated in the following theorem.

\begin{teo}
\label{G}
The set 
\[\mathscr{F} = \left\{\A(\xi) \in \widetilde{\hh[\xi]}: 
\A(\xi) \mathrm{\ generates\ an\ RRMF\ curve}\right\}\] 
can be characterized as follows:
\[
\mathscr{F} = \left\{\C(\xi)\,{\delta(\xi)} \,:\, \C(\xi) 
\mathrm{\ is\ the\ core\ of\ an\ element\ of\ } \mathscr{F}_0
\mathrm{\ and\ } \delta(\xi)\in \widetilde{\cc[\xi]}
\right\}.
\]
Consequently, a quaternion polynomial $\C(\xi) \in \mathscr{F}$ will
generate a primitive hodograph $\C(\xi)\, \i\, \C^*(\xi)$ if and only 
if it is the core of an element of $\mathscr{F}_0$.
\end{teo}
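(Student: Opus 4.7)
\emph{Forward inclusion ($\mathscr{F} \subseteq$ right-hand side).} The plan is to start from $\A \in \mathscr{F}$, invoke Theorem~\ref{Fourier} to obtain $\gamma \in \widetilde{\cc[\xi]}$ with $\A \in \mathscr{F}_\gamma$, and then Theorem~\ref{finale} to produce $\B := \A\,\gamma^*\,|\gcd_\hh(\A,\gamma)|^{-2} \in \mathscr{F}_0$. I write $\A = \alpha + \beta\,\j$, set $\chi_\A := \gcd_\cc(\alpha,\beta^*)$ so that $\A = \C_\A\,\chi_\A$ with $\C_\A$ the core of $\A$, and $\mu := \gcd_\hh(\A,\gamma) = \gcd_\cc(\alpha,\beta^*,\gamma)$. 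After factoring $\mu$ out of both $\A$ and $\gamma$ and exploiting the fact that complex polynomials commute with their conjugates, $\B$ rewrites as a product $\A_1\,\gamma_1^*$; computing the $\gcd_\cc$ of its complex components will show that the core of $\B$ is precisely $\C_\A$. A quick check that the coprimality of the real components of $\A$ forces $\chi_\A \in \widetilde{\cc[\xi]}$ will then exhibit $\A = \C_\A\,\chi_\A$ in the required form.

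\emph{Reverse inclusion.} For the converse I take $\C$ to be the core of some $\B \in \mathscr{F}_0$ and $\delta \in \widetilde{\cc[\xi]}$ with $\C\delta \in \widetilde{\hh[\xi]}$. By Definition~\ref{core}, $\B = \C\chi$ for a complex $\chi$, which lies in $\widetilde{\cc[\xi]}$ because $\B$ does. Applying Corollary~\ref{corollario} first to the factorization $\B = \C\chi$ (to read off the indicatrix of $\C$ as minus that of $\chi$) and then to $\C\delta$, together with Corollary~\ref{bar}, will show that the rotation indicatrix of $\C\delta$ equals that of $\chi^*\delta$. Dividing out the real content $h \in \rr[\xi]$ of $\chi^*\delta$ yields $\gamma := \chi^*\delta/h \in \widetilde{\cc[\xi]}$ with matching indicatrix, since a real polynomial has vanishing rotation indicatrix by the definition. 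Theorem~\ref{Fourier} then gives $\C\delta \in \mathscr{F}_\gamma \subseteq \mathscr{F}$.

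\emph{Primitivity criterion.} For the second statement I invoke the remark after Definition~\ref{core}: $\C \in \mathscr{F}$ generates a primitive hodograph iff $\C$ equals its own core. Combining this with the first part, any such $\C$ can be written $\C = \C_0\,\delta$ with $\C_0$ the core of an $\mathscr{F}_0$-element; writing $\C_0 = \alpha_0 + \beta_0\,\j$, the complex components of $\C_0\delta$ are $\alpha_0\delta$ and $(\beta_0\delta^*)^* = \beta_0^*\delta$, whose $\gcd_\cc$ equals $\delta\cdot\gcd_\cc(\alpha_0,\beta_0^*) = \delta$. Hence the core of $\C_0\delta$ is $\C_0$, so $\C$ equals its own core precisely when $\delta$ is a constant, i.e., when $\C$ itself is the core of an element of $\mathscr{F}_0$.

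\emph{Main obstacle.} The delicate step is the identification in the forward direction of the core of $\B$ with the intrinsic core $\C_\A$ of $\A$: one must track how the factor $\mu = \gcd_\hh(\A,\gamma)$ interacts with the normalization $|\mu|^2 = \mu\mu^*$ appearing in the denominator of $\B$, and verify that the resulting $\gcd_\cc$ of the complex components of $\B = \A_1\gamma_1^*$ factors cleanly as $\gamma_1^*\cdot(\chi_\A/\mu)$, so that the core of $\B$ is indeed $\A/\chi_\A = \C_\A$, independent of the auxiliary $\gamma$. Once this bookkeeping is settled, every other step reduces to a direct application of the rotation-indicatrix calculus developed in Section~\ref{sec:fourierindicatrix}.
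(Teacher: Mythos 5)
Your proposal is correct and follows essentially the same route as the paper: Theorem~\ref{finale} reduces membership in $\mathscr{F}$ to membership of $\A\,\gamma^*\,|\gcd_{\hh}(\A,\gamma)|^{-2}$ in $\mathscr{F}_0$, the core decomposition $\A=\C\,\delta$ identifies the core of that product with the core of $\A$, and the converse uses Corollaries~\ref{corollario} and~\ref{bar} to place $\C\delta$ in $\mathscr{F}_\nu$ for a suitable $\nu\in\widetilde{\cc[\xi]}$ obtained by stripping the real content. The bookkeeping you single out as the main obstacle is precisely the (tersely stated) computation in the paper's proof, so no genuinely different idea is involved.
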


\begin{proof}
By Theorem \ref{finale}, we observe that 
\[
\mathscr{F} = \bigcup_{\gamma\in\widetilde{\cc[\xi]}} \mathscr{F}_\gamma 
= \left\{\A \in \widetilde{\hh[\xi]} \,:\, \exists\ \gamma \in 
\widetilde{\cc[\xi]} \mathrm{\ s.t.\ }\A\,\gamma^*\,|\textstyle{\gcd_\hh}
(\A,\gamma)|^{-2} \in \mathscr{F}_0\right\} \,.
\] 
Now if $\C(\xi)$ is the core of a polynomial $\A(\xi)\in\widetilde{\hh[\xi]}$, 
then
\[
\A(\xi) = \C(\xi) {\delta(\xi)}
\]
for some $\delta(\xi)\in \widetilde{\cc[\xi]}$. The existence of a 
$\gamma \in \widetilde{\cc[\xi]}$ such that 
\[
\mathscr{F}_0 \ni \A\,\gamma^*\,|\textstyle{\gcd_\hh}(\A,\gamma)|^{-2} 
= \C\, \delta\, \gamma^* |\textstyle{\gcd_\cc}(\delta,\gamma)|^{-2}
\]
implies that $\C$ is the core of an element of $\mathscr{F}_0$. Conversely, 
if $\C$ is the core of $\B \in\mathscr{F}_0$, then
\[
\mathscr{F}_0\ni \B(\xi) = \C(\xi) \mu(\xi)
\]
for some $\mu \in \widetilde{\cc[\xi]}$, whence $\C\in\mathscr{F}_{\mu^*}$ 
by Corollaries~\ref{corollario} and~\ref{bar}. Consequently, for every 
$\delta(\xi)\in \widetilde{\cc[\xi]}$ the product $\C(\xi)\delta(\xi)$ has 
the same rotation indicatrix as $\nu:=\mu^*\delta\,|\gcd_\cc(\mu,\delta)|
^{-2}$, and is therefore an element of $\mathscr{F}_\nu \subset \mathscr{F}$.
\end{proof}

The study of the set $\mathscr{F}$ has thus been reduced to the study of 
$\mathscr{F}_0$, which we undertake in the following section.

\section{Polynomials with vanishing indicatrix}
\label{sec:vanishingindicatrix}

Two characterizations of the polynomials $\A(\xi) \in \mathscr{F}_0$ 
are presented below. The first characterization is expressed in terms of 
the complex polynomials $\alpha(\xi),\beta(\xi) \in \cc[\xi]$ such that 
$\A(\xi)=\alpha(\xi)+\beta(\xi)\,\j$.

\begin{pro}
\label{caso piano}
Let $\A(\xi)\in\widetilde{\hh[\xi]}$ and let $\alpha(\xi),
\beta(\xi)\in\cc[\xi]$ be such that $\A(\xi)=\alpha(\xi)+\beta(\xi)\,\j$. 
Then $\A(\xi)\in\mathscr{F}_0$ if and only if
\[
\langle \alpha'\i,\alpha\rangle = \langle \beta'\i,\beta\rangle \,.
\]
\end{pro}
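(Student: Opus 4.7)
The approach is a direct bilinearity computation, exploiting the orthogonal decomposition $\hh=\cc\oplus\cc\,\j$ as $\rr^4$. Writing $\A=\alpha+\beta\,\j$ gives $\A'\i=\alpha'\i+\beta'\j\i$, so by bilinearity of $\langle\,\cdot\,,\,\cdot\,\rangle$ the quantity $\langle\A'\i,\A\rangle$ expands into four inner products. The plan is to show that the two cross terms vanish and the two diagonal terms collapse to $\langle\alpha'\i,\alpha\rangle-\langle\beta'\i,\beta\rangle$, from which the equivalence is immediate.

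First, the cross terms. Observe that $\cc=\rr+\rr\i$ and $\cc\,\j=\rr\j+\rr\k$ are mutually orthogonal subspaces of $\hh\cong\rr^4$. Since $\alpha'\i\in\cc$ and $\beta\,\j\in\cc\,\j$, the term $\langle\alpha'\i,\beta\,\j\rangle$ is zero. For the symmetric term, using $\j\i=-\k$ (i.e.\ the relation $\j\i=-\i\j$ together with the fact that $\beta'\in\cc$ commutes with $\i$) yields $\beta'\j\i=-(\beta'\i)\j\in\cc\,\j$, which is orthogonal to $\alpha\in\cc$, so $\langle\beta'\j\i,\alpha\rangle=0$ as well.

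For the diagonal contributions, the first is already $\langle\alpha'\i,\alpha\rangle$. For the second, combining $\beta'\j\i=-(\beta'\i)\j$ with the fact that right-multiplication by the unit quaternion $\j$ is an isometry of $\rr^4$ gives
\[
\langle\beta'\j\i,\beta\,\j\rangle=\langle-(\beta'\i)\,\j,\beta\,\j\rangle=-\langle\beta'\i,\beta\rangle.
\]
Adding everything up yields $\langle\A'\i,\A\rangle=\langle\alpha'\i,\alpha\rangle-\langle\beta'\i,\beta\rangle$, so $\A\in\mathscr{F}_0$ (which by definition requires only $\langle\A'\i,\A\rangle=0$, the coprimality of real components being built into the hypothesis $\A\in\widetilde{\hh[\xi]}$) is equivalent to $\langle\alpha'\i,\alpha\rangle=\langle\beta'\i,\beta\rangle$.

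There is no substantial obstacle; the only thing to be careful about is the sign produced by the non-commutativity relation $\j\i=-\i\j$, since this is precisely what creates the minus sign in the decomposition and therefore determines that the characterization is an \emph{equality} of the two complex indicatrices (rather than a vanishing of their sum). As a sanity check, the formula matches the real-coordinate expression from Lemma~\ref{indicatrix}: with $\alpha=u+v\,\i$ and $\beta=p+q\,\i$ one has $\langle\alpha'\i,\alpha\rangle=u'v-v'u$ and $\langle\beta'\i,\beta\rangle=p'q-q'p$, whose difference reproduces the numerator in \eqref{Findicatrix}.
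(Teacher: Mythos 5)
Your proof is correct and follows essentially the same route as the paper: expand $\langle\A'\i,\A\rangle$ by bilinearity into four terms, kill the two cross terms by the orthogonality of $\cc$ and $\rr\j+\rr\k$ in $\rr^4\cong\hh$, and evaluate the last diagonal term as $-\langle\beta'\i,\beta\rangle$ via the isometry of right multiplication by $\j$. The only cosmetic difference is that the paper first moves the $\j$ across the inner product (turning the cross terms into $-\langle\alpha'\k,\beta\rangle$ and $-\langle\beta'\k,\alpha\rangle$) before invoking the same orthogonality.
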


\begin{proof}
By direct computation,
\begin{align*}
\langle \A'\i,\A\rangle &= \langle \alpha'\i,\alpha\rangle 
+ \langle \alpha'\i,\beta \j\rangle 
+ \langle \beta' \j\i,\alpha\rangle 
+ \langle \beta' \j\i,\beta \j\rangle \\
&= \langle \alpha'\i,\alpha\rangle 
- \langle \alpha'\k,\beta \rangle 
- \langle \beta'\k,\alpha\rangle
- \langle \beta'\i,\beta\rangle,
\end{align*}
where $\langle\alpha'\k,\beta\rangle=0=\langle\beta'\k,\alpha\rangle$, 
since $\cc\k$ and $\cc$ span mutually orthogonal planes in $\rr^4\cong\hh$.
\end{proof}

The second characterization of $\mathscr{F}_0$ identifies the polynomials 
of degree $n$ belonging to $\mathscr{F}_0$ by means of $2n-1$ real equations.  
In order to fully justify the notations used below, we identify any 
polynomial $\A_n\xi^n+\cdots+\A_1\xi+\A_0$ with a series $\sum_{m\in\nn} 
\A_m\xi^m$ whose coefficients vanish for all $m>n$.

\begin{teo}
Let $\A(\xi)=\A_n\xi^n + \cdots + \A_1\xi + \A_0$ be a quaternion 
polynomial of degree $n$ with coprime real components. Then $\A \in 
\mathscr{F}_0$ if and only if all the real numbers defined by
\[
c_m^{(n)} := \sum_{k=0}^m (k+1)\left\langle \A_{m-k}, \A_{k+1}\i\right\rangle ,
\quad m=0,\ldots,2n-2
\]
vanish. Moreover, if the symbols $c_m^{(n-1)}$ denote the analogous 
expressions for the polynomial $\A_{n-1}\xi^{n-1}+\cdots+\A_1\xi+\A_0$, then 
the following equalities hold:
\ba
\label{inductivestep}
c^{(n)}_m \!\! &=& \!\!
c^{(n-1)}_m \,, \quad m=0,\ldots,n-2 \,, \nonumber \\
c^{(n)}_m \!\! &=& \!\!
c^{(n-1)}_m + (2n-m-1)\langle{\cal A}_{m+1-n},{\cal A}_n\i\rangle \,,
\quad m=n-1,\ldots,2n-4 \,, \qquad \\
c^{(n)}_m \!\! &=& \!\!
(2n-m-1)\langle{\cal A}_{m+1-n},{\cal A}_n\i\rangle \,,
\quad m=2n-3,2n-2 \,, \nonumber \\
c^{(n)}_m \!\! &=& \!\! 0 \,,
\quad m\geq2n-1 \,. \nonumber
\ea
\end{teo}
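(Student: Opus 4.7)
The plan is to expand the polynomial $\langle \A'(\xi)\,\i, \A(\xi)\rangle$ in powers of $\xi$ and simply read off the coefficients. Writing $\A(\xi)=\sum_{j\geq 0}\A_j\xi^j$ and $\A'(\xi)\,\i=\sum_{k\geq 0}(k+1)\A_{k+1}\,\i\,\xi^k$ (with the convention $\A_j=0$ for $j>n$), bilinearity and symmetry of $\langle\cdot,\cdot\rangle$ give
\[
\langle\A'\,\i,\A\rangle(\xi) \,=\, \sum_{m\geq 0}c_m^{(n)}\,\xi^m,
\]
so $\A\in\mathscr{F}_0$ is equivalent to the vanishing of every $c_m^{(n)}$. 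Next I would check that the conditions are automatic for $m\geq 2n-1$: the case $m\geq 2n$ is a pure degree count, while for $m=2n-1$ only the index $k=n-1$ survives, producing $n\,\langle\A_n,\A_n\,\i\rangle$, which is zero by the identity $\langle\X,\X\,\i\rangle=0$ valid for every $\X\in\hh$ (since $\X\,\i\,\X^*$ is a pure quaternion). This proves the first assertion.

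For the inductive relations \eqref{inductivestep}, I would set $\widetilde{\A}(\xi)=\A_{n-1}\xi^{n-1}+\cdots+\A_0$ and isolate the summands of $c_m^{(n)}$ that actually involve $\A_n$. These arise from $k+1=n$ (contributing $n\,\langle\A_{m+1-n},\A_n\,\i\rangle$ when $m\geq n-1$) and from $m-k=n$ (contributing $(m-n+1)\,\langle\A_n,\A_{m+1-n}\,\i\rangle$ when $m\geq n$); the two index choices coincide only when $m=2n-1$. The key algebraic identity
\[
\langle\X,\Y\,\i\rangle \,=\, -\,\langle\Y,\X\,\i\rangle, \qquad \X,\Y\in\hh
\]
(equivalent to $\langle A\X,\Y\rangle=-\langle\X,A\Y\rangle$ for $A$ equal to right multiplication by $\i$, since $A$ is orthogonal with $A^{-1}=-A$) then merges the two contributions into $\bigl(n-(m-n+1)\bigr)\,\langle\A_{m+1-n},\A_n\,\i\rangle=(2n-m-1)\,\langle\A_{m+1-n},\A_n\,\i\rangle$. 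Hence $c_m^{(n)}=c_m^{(n-1)}+(2n-m-1)\,\langle\A_{m+1-n},\A_n\,\i\rangle$ for all $n-1\leq m\leq 2n-2$, while $c_m^{(n)}=c_m^{(n-1)}$ for $m\leq n-2$ because no $\A_n$ summand appears. The splitting into three ranges in \eqref{inductivestep} is then cosmetic: applying the first part of the theorem to $\widetilde\A$ (of degree at most $n-1$) forces $c_m^{(n-1)}=0$ whenever $m\geq 2n-3$, which is exactly why the $c_m^{(n-1)}$ term is dropped in the third case; and $c_m^{(n)}=0$ for $m\geq 2n-1$ was already established.

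The whole argument is essentially bookkeeping, and the only non-cosmetic ingredient is the skew identity $\langle\X,\Y\,\i\rangle=-\langle\Y,\X\,\i\rangle$, without which the two $\A_n$--contributions would combine with coefficient $n+(m-n+1)=m+1$ rather than the correct $2n-m-1$. The main (mild) difficulty I anticipate is making the index ranges in \eqref{inductivestep} line up exactly across the three regimes and correctly handling the boundary case $m=2n-1$ where the two $\A_n$--indices collapse; once the skew identity is applied, everything reduces to careful index tracking.
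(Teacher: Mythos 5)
Your argument is correct and is essentially the paper's own proof: expand $\langle\A'\i,\A\rangle$ as a polynomial to read off the coefficients $c_m^{(n)}$, isolate the two summands involving $\A_n$ (from $k+1=n$ and $m-k=n$), and combine them via the skew identity $\langle\X,\Y\i\rangle=-\langle\Y,\X\i\rangle$ to get the coefficient $2n-m-1$, with the high-index cases killed by $\langle\X,\X\i\rangle=0$. The only difference is presentational — the paper states the vanishing of $c_{2n-3}^{(n-1)}$ and $c_m^{(n-1)}$ for $m>2n-3$ directly rather than by invoking the degree-$(n-1)$ instance of the last line of \eqref{inductivestep} — and this changes nothing of substance.
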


\begin{proof}
By definition $\A \in \mathscr{F}_0 \Longleftrightarrow \langle\A,\A'\i\rangle 
\equiv 0$. Now by direct computation,
\[
\A'(\xi)\,\i \,=\, n\A_n\i\,\xi^{n-1} + \cdots + \A_1\i
\]
and
\[\
\langle\A,\A'\i\rangle \,=\, 
c_{2n-1}^{(n)}\xi^{2n-1} + \cdots + c_{1}^{(n)}\xi + c_{0}^{(n)} \,,
\]
where 
\[
c_m^{(n)} \,:=\, 
\sum_{k=0}^m (k+1)\left\langle \A_{m-k}, \A_{k+1}\i\right\rangle \,.
\]
For all $m\leq n-2$ this expression does not involve the coefficient $\A_n$, 
and the equality $c_{m}^{(n)}=c_{m}^{(n-1)}$ immediately follows. For $n-1 
\leq m \leq 2n-2$, we have 
\begin{align*}
c_{m}^{(n)} &= c_{m}^{(n-1)} + n\left\langle \A_{m-n+1},\A_n\i\right\rangle 
+ (m-n+1)\left\langle \A_{n}, \A_{m-n+1}\i\right\rangle\\
&= c_{m}^{(n-1)} + (2n -m -1) \left\langle \A_{m-n+1}, \A_n\i\right\rangle \,,
\end{align*}
since $\langle\A_{n},\A_{m-n+1}\i\rangle=-\langle\A_n\i,\A_{m-n+1}\rangle$. 
Finally, $c_{2n-1}^{(n)}=n\langle\A_n,\A_n\i\rangle=0$, $c_{2n-3}^{(n-1)}=
(n-1)\langle\A_{n-1},\A_{n-1}\i\rangle=0$, and $c_m^{(n-1)}=0$ for $m>2n-3$.
\end{proof}

The following remark, which is a direct consequence of Proposition~\ref
{formulaprodotto}, allows us to work up to multiplication by a quaternion.

\begin{oss}
\label{uptoconstant}
$\A(\xi)\in\mathscr{F}_0 \Longleftrightarrow \C\A(\xi)\in\mathscr{F}_0$
for all $\A(\xi)\in\hh[\xi]$ and nonzero $\C \in \hh$.
\end{oss}

We are now ready to determine a special subclass of $\mathscr{F}_0$.

\begin{cor}\label{special}
Let $\A(\xi)$ be a quaternion polynomial of degree $n$ with coprime real 
components. If there exists a nonzero $\C \in \hh$ and a unit vector
$\u \perp\i$ such that $\A(\xi)=\C(\A_n\xi^n+\cdots+\A_1\xi+\A_0)$, with 
$\A_0,\ldots,\A_n \in \rr+\rr \u$, then $\A(\xi) \in \mathscr{F}_0$.
\end{cor}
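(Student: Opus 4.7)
The plan is to use Remark \ref{uptoconstant} to strip off the left factor $\C$, and then verify the vanishing of the rotation indicatrix of the remaining polynomial by a direct orthogonality computation in $\hh\cong\rr^4$.

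First I would set $P(\xi):=\A_n\xi^n+\cdots+\A_1\xi+\A_0$, so that $\A(\xi)=\C\,P(\xi)$. Because left-multiplication by the nonzero quaternion $\C$ is an $\rr$-linear automorphism of $\hh$, the real components of $\A$ are coprime if and only if those of $P$ are, so $P\in\widetilde{\hh[\xi]}$. By Remark \ref{uptoconstant} it therefore suffices to establish that $\langle P'\i,P\rangle\equiv 0$.

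Next I would decompose $P(\xi)=A(\xi)+B(\xi)\,\u$ with $A,B\in\rr[\xi]$, which is possible since each coefficient $\A_k$ lies in $\rr+\rr\u$. Then $P'\i=A'\i+B'(\u\i)$, and bilinearity gives
\[
\langle P'\i,P\rangle \,=\, \langle A'\i,A\rangle + \langle A'\i,B\u\rangle + \langle B'\,\u\i,A\rangle + \langle B'\,\u\i,B\u\rangle.
\]
The key observation is that $\{1,\i,\u,\u\i\}$ is an orthonormal basis of $\hh$: indeed $\u$ is a unit pure vector with $\u\perp\i$, and the isometry property of multiplication by unit quaternions yields $\langle\u\i,1\rangle=0$ (since $\u\i$ is pure, as $\u\perp\i$), $\langle\u\i,\i\rangle=\langle\u,1\rangle=0$ by right-multiplication by $\i^{-1}$, and $\langle\u\i,\u\rangle=\langle\i,1\rangle=0$ by left-multiplication by $\u^{-1}$. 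Each of the four terms in the expansion above is then a real polynomial times an inner product between two distinct elements of the orthonormal set $\{1,\i,\u,\u\i\}$, and hence vanishes identically.

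I do not anticipate any real obstacle: the nontrivial content is precisely the orthonormality of $\{1,\i,\u,\u\i\}$, which is a direct consequence of the hypothesis $\u\perp\i$ together with the isometry property of quaternion multiplication; everything else is bookkeeping via Remark \ref{uptoconstant} and bilinearity.
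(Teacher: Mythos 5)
Your proof is correct, and it takes a genuinely different route from the paper's. The paper reduces to $\C=1$ via Remark~\ref{uptoconstant} exactly as you do, but then argues by induction on the degree $n$ using the coefficient conditions $c_m^{(n)}$ and the recursion \eqref{inductivestep}: the inductive step comes down to showing that $\A_k\i$ is orthogonal to $\A_0,\ldots,\A_{k-1}$, which holds because $\A_k\i$ lies in the plane $\rr\i+\rr\u\i$, orthogonal to $\rr+\rr\u$. You instead write $P(\xi)=A(\xi)+B(\xi)\,\u$ with $A,B\in\rr[\xi]$ and expand $\langle P'\i,P\rangle$ by bilinearity into four terms, each a real polynomial multiple of an inner product of two distinct elements of the orthonormal set $\{1,\i,\u,\u\i\}$; your verification of that orthonormality (purity of $\u\i$ from $\u\perp\i$, plus the isometry of left and right multiplication by unit quaternions) is sound, as is your observation that left multiplication by $\C$ preserves coprimality of the real components so that Remark~\ref{uptoconstant} applies. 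Both arguments ultimately rest on the same geometric fact, namely $\rr\i+\rr\u\i\perp\rr+\rr\u$, but your global decomposition over the commutative subalgebra $\rr+\rr\u\cong\cc$ eliminates the induction entirely and is shorter and more transparent (for $\u=\j$ it is essentially the trivial case of Proposition~\ref{caso piano}); the paper's coefficientwise route has the advantage of exercising the formulae \eqref{inductivestep}, which are reused in the proof of Theorem~\ref{teotrivial}.
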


\begin{proof}
In view of Remark~\ref{uptoconstant}, the result is proved if we can verify
that, for each unit vector $\u \perp\i$, any polynomial $\A_n\xi^n+\cdots 
+\A_1\xi+\A_0$ with $\A_0,\ldots,\A_n \in \rr+\rr\u$ belongs to $\mathscr{F}
_0$. We prove this by induction.

The statement is clearly true for $n=0$, since by inspection all constants 
belong to $\mathscr{F}_0$. Now suppose it is true for $n=k-1$, so the $c_m^
{(k-1)}$ corresponding to $\A_0,\ldots,\A_{k-1}$ all vanish. We will show 
that it is also true for $n=k$, i.e., the $c_m^{(k)}$ corresponding to $\A_0,
\ldots,\A_k$ all vanish as well. According to the inductive hypothesis and 
formulae \eqref{inductivestep}, we need only show that $\A_k\i$ is orthogonal 
to $\A_0,\ldots,\A_{k-1} \in \rr+\rr \u$. This is indeed the case, since 
$\A_k\i$ belongs to the plane $\rr\i+\rr\u\i$, which is orthogonal to 
$\rr+\rr\u$.
\end{proof}

Inspired by the last corollary, we give the following definition.

\begin{defi}\label{trivial}
A polynomial $\A(\xi)\in\mathscr{F}_0$ is called \emph{trivial} if a 
nonzero $\C\in\hh$ and a unit vector $\u$ with $\u\perp\i$ exist, such 
that $\A(\xi)=\C\tilde\A(\xi)$ for some polynomial $\tilde\A(\xi)$ whose 
coefficients all lie in $\rr+\rr\u$.
\end{defi}

The set of trivial elements of $\mathscr{F}_0$ is studied in the following 
theorem and the subsequent remark. First, we introduce the notation
\[
\mathscr{F}_0^{(n)} := \{\A \in \mathscr{F}_0 : \deg(\A)=n\} \,.
\]

\begin{teo}
\label{trivialhasnofactors}
Let $\A(\xi)$ be a trivial element of $\mathscr{F}_0$. Then $\A$ coincides 
with its own core. As a consequence, the hodograph $\r'(\xi)=\A(\xi)\,\i\,
\A^*(\xi)$ is primitive.
\end{teo}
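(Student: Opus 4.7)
The plan is to prove that the \emph{core} of the trivial polynomial $\A(\xi)$ equals $\A(\xi)$ itself; the primitivity of the hodograph then follows immediately from the Remark just after Definition~\ref{core}. By Definition~\ref{trivial} I can write $\A(\xi) = \C\,\tilde\A(\xi)$ with $\C\in\hh\setminus\{0\}$ constant and $\tilde\A(\xi) = \phi(\xi) + \psi(\xi)\,\u$ for some $\phi,\psi\in\rr[\xi]$ and some unit vector $\u \perp \i$.

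First I would observe that, since left multiplication by the nonzero quaternion constant $\C$ is invertible in $\hh[\xi]$, a monic complex polynomial divides $\A$ on the right if and only if it divides $\tilde\A = \C^{-1}\A$ on the right. Hence the polynomial $\chi\in\cc[\xi]$ of Definition~\ref{core} is the same whether computed from $\A$ or from $\tilde\A$, and the task reduces to showing $\chi=1$ for $\tilde\A$.

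To compute this $\chi$, I would write $\u = u_2\,\j + u_3\,\k$ with $u_2^2+u_3^2=1$, so that $\psi\u = (\psi u_2 + \psi u_3\,\i)\,\j$. In the decomposition $\tilde\A = \tilde\alpha + \tilde\beta\,\j$ one then has $\tilde\alpha = \phi$ and $\tilde\beta = \psi\,\omega$, where $\omega := u_2 + u_3\,\i \in \cc$ is a complex constant of unit modulus. Since $\phi,\psi$ have real coefficients (so $\psi^*=\psi$) and $\omega^*$ is a nonzero constant,
\[
\chi = \gcd_\cc(\tilde\alpha,\tilde\beta^*) = \gcd_\cc(\phi,\psi\,\omega^*) = \gcd_\cc(\phi,\psi) = \gcd_\rr(\phi,\psi).
\]

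It then remains only to check that $\gcd_\rr(\phi,\psi)=1$. Any $r(\xi)\in\rr[\xi]$ dividing both $\phi$ and $\psi$ divides $\tilde\A$ in $\hh[\xi]$; since $r$ commutes with the quaternion constant $\C$, it also divides $\A = \C\tilde\A$, and hence each of its four real components. As $\A \in \widetilde{\hh[\xi]}$, these components are coprime, forcing $r$ to be a constant. Thus $\chi=1$, the core of $\A$ coincides with $\A$, and the hodograph is primitive by the Remark after Definition~\ref{core}. The only step requiring a moment's care is the preliminary observation that $\chi$ is insensitive to left multiplication by the constant $\C$; this is immediate once one uses the equivalent description of $\chi$ as the highest-degree monic complex right divisor of the quaternion polynomial, as already noted in Definition~\ref{core}.
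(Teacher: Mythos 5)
Your proof is correct, and it takes a genuinely different route from the paper's. The paper argues by contradiction: if $\A$ were right-divisible by a complex polynomial of positive degree, then $\tilde\A$ would be right-divisible by some $\xi-\alpha_0$ with $\alpha_0\in\cc$, and it then invokes the factorization theory of quaternion polynomials (the cited results of Gentili--Stoppato and Gentili--Struppa) to conclude that, since all coefficients of $\tilde\A$ lie in $\rr+\rr\u$ while $\alpha_0$ lies in $\rr+\rr\i$, either $\alpha_0$ is real or the conjugate root also appears --- in either case forcing a nonconstant real common factor $\xi-\alpha_0$ or $\xi^2-\xi(\alpha_0+\alpha_0^*)+|\alpha_0|^2$ of the components, contradicting coprimality. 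You instead compute the core head-on: writing $\tilde\A=\phi+\psi\,\u$ with $\u=(u_2+u_3\i)\j$, you identify $\tilde\alpha=\phi$, $\tilde\beta=\psi\omega$ with $\omega=u_2+u_3\i$ a unit complex constant, so that $\chi=\gcd_\cc(\phi,\psi\omega^*)=\gcd_\rr(\phi,\psi)$, which must be $1$ by coprimality of the components of $\A$. Your two auxiliary observations --- that the highest-degree monic complex right divisor is unchanged under left multiplication by the invertible constant $\C$, and that $\gcd_\cc$ of two real polynomials equals their $\gcd_\rr$ --- are both sound and adequately justified. The trade-off: your argument is more elementary and self-contained (no appeal to the zero/factorization structure of quaternion polynomials, no contradiction), and it explicitly exhibits $\chi$; the paper's argument, while heavier, fits into a factorization-theoretic viewpoint that the authors also use elsewhere. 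Either proof is acceptable.
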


\begin{proof}
Let $\A(\xi)\in\mathscr{F}_0^{(n)}$ be trivial. By Definition~\ref
{trivial}, $\A(\xi)=\C\tilde \A(\xi)$ where $\tilde \A(\xi)= \tilde\A_n
\xi^n+\cdots+\tilde\A_0$ has all of its coefficients $\tilde\A_0,\ldots,
\tilde\A_n$ in $\rr+\rr\u$ for some unit vector $\u$ with $\u\perp\i$. 
Suppose, by contradiction, that $\A(\xi)$ is divisible to the right by 
a complex polynomial $\gamma(\xi)$ of degree $m\geq1$, and consequently 
$\tilde\A(\xi)$ is divisible (on the right) by $\xi-\alpha_0$ for some 
$\alpha_0 \in \cc$. Then one of the quaternion factorizations 
\[
\tilde \A(\xi) = \tilde \A_n(\xi - \Q_1) \ldots (\xi - \Q_n)
\]
of $\tilde\A(\xi)$ has $\Q_n = \alpha_0 \in \cc$. From the properties 
of quaternion factorizations studied in \cite{gentili08a,gentili08b} the 
fact that $\tilde\A_0,\ldots,\tilde\A_n \in \rr+\rr\u$ along with the fact 
that $\Q_n \in \cc$ implies that: (i) $\Q_n \in \rr$; or (ii) $\Q_{n-1}= 
\Q_n^*$. But in case (i) the real components of $\A(\xi)$ would share 
the common factor $\xi-\Q_n$, which is excluded by the definition of 
$\mathscr{F}_0$. In case (ii), they would share the common factor $\xi^2 
-\xi(\Q_n+\Q_n^*)+|\Q_n|^2$, which is again a contradiction.
\end{proof}

By inspection of the formulae~\eqref{inductivestep} we obtain the following.

\begin{oss}
The set of trivial elements of $\mathscr{F}_0^{(n)}$ has real dimension 
$2n+5$ for all $n\geq 1$.
\end{oss}

\section{Identification of non-planar RRMF curves}
\label{sec:nonplanar}

The present section identifies all elements of $\mathscr{F}$ that generate
non--planar PH curves. It is natural to begin by studying $\mathscr{F}_0$. 
As part of the main result, we recover a property shown in \cite{choi02a}, 
namely, the simplest quaternion polynomials $\A(\xi)\in\mathscr{F}_0$ that 
generate non--planar RRMF curves are cubic.

\begin{teo}\label{teotrivial}
Let $\r(\xi)$ be a PH curve with hodograph $\r'(\xi)=\A(\xi)\,\i\,\A^*(\xi)$ 
for some polynomial $\A(\xi)\in\mathscr{F}_0$. Then $\r(\xi)$ is planar if 
and only if $\A(\xi)$ is trivial, which is always the case if $\deg(\A)\leq 
2$. Moreover, $\r(\xi)$ is a straight line if and only if $\deg(\A)=0$.
\end{teo}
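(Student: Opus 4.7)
I will split the theorem into four implications: \emph{trivial $\Rightarrow$ planar}, \emph{planar $\Rightarrow$ trivial}, \emph{$\deg(\A)\leq 2\Rightarrow$ trivial}, and \emph{straight line $\Leftrightarrow\deg(\A)=0$}, regarding the second as the main obstacle. For the first, writing $\A=\C\hat\A$ with $\hat\A=P+Q\u$, $P,Q\in\rr[\xi]$ and $\u$ a unit pure quaternion with $\u\perp\i$, the anticommutation $\u\,\i=-\i\,\u$ gives $\hat\A\,\i=\i\,\hat\A^*$, hence $\hat\A\,\i\,\hat\A^*=(P^2-Q^2)\,\i-2PQ\,(\i\,\u)$ lies in the $2$-plane $\rr\i+\rr(\i\,\u)$ of pure quaternions, and conjugation by the constant $\C$ sends this to another $2$-plane, making $\r$ planar. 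The implication $\deg(\A)=0\Rightarrow\r$ a straight line is immediate since the hodograph is then a constant vector.

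The converse, \emph{planar $\Rightarrow$ trivial}, is the crux. Given a unit pure $\n$ with $\langle\n,\A\,\i\,\A^*\rangle\equiv 0$, I would pick a unit quaternion $\Q$ satisfying $\Q\,\i\,\Q^*=\n$ and work with $\tilde\A:=\Q^*\A$; by Remark~\ref{uptoconstant} this remains in $\mathscr{F}_0$, and the hodograph of $\tilde\A$ has vanishing $\i$-component, yielding $|\alpha|^2\equiv|\beta|^2$ where $\tilde\A=\alpha+\beta\,\j$. Letting $\gamma:=\gcd_\cc(\alpha,\beta)$, $\alpha=\gamma\alpha_1$, $\beta=\gamma\beta_1$ with $\gcd_\cc(\alpha_1,\beta_1)=1$, the identity $\alpha_1\alpha_1^*=\beta_1\beta_1^*$ in the UFD $\cc[\xi]$ combined with coprimality forces $\beta_1=c\,\alpha_1^*$ for some $c\in\cc$ with $|c|=1$. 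Substituting this into Proposition~\ref{caso piano}, the $\gamma$-dependent terms cancel and one is left with $\Im(\alpha_1\bar\alpha_1')\equiv 0$, meaning $\alpha_1$ has constant complex argument: $\alpha_1=R\,e^{i\theta_0}$ for some $R\in\rr[\xi]$ and constant $\theta_0\in\rr$. Hence $\tilde\A=(\gamma R)\,\C_1$ with the constant quaternion $\C_1:=e^{i\theta_0}+c\,e^{-i\theta_0}\,\j$; writing $\gamma R=P+Q\,\i$ with $P,Q\in\rr[\xi]$, a short quaternion computation shows $\C_1^*\,\i\,\C_1\in\rr\j+\rr\k$, so $\u:=\C_1^{-1}\,\i\,\C_1$ is a unit vector perpendicular to $\i$, giving $\i\,\C_1=\C_1\,\u$ and $\tilde\A=\C_1(P+Q\,\u)$. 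This exhibits $\tilde\A$, and hence $\A=\Q\,\tilde\A$, as trivial.

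For \emph{$\deg(\A)\leq 2\Rightarrow$ trivial} I would unpack the defining condition $\langle\A'\i,\A\rangle\equiv 0$ coefficient by coefficient and use the identity $\langle\A_n\x,\A_n\y\rangle=|\A_n|^2\langle\x,\y\rangle$ to divide out the leading coefficient. For $\deg(\A)=1$, setting $\w:=\A_1^{-1}\A_0$, the single equation becomes $\w\perp\i$, so $\w\in\rr+\rr\j+\rr\k$ and picking $\u$ proportional to the $\j\k$-part of $\w$ places $\A_0,\A_1$ in $\A_1(\rr+\rr\u)$. For $\deg(\A)=2$, with $\w_k:=\A_2^{-1}\A_k$ the three coefficient equations reduce to $\w_0,\w_1\perp\i$ together with $\langle\w_0,\w_1\,\i\rangle=0$; writing $\w_k=r_k+p_k\j+q_k\k$, the last equation unpacks to $p_0q_1=q_0p_1$, meaning the $\j\k$-parts of $\w_0,\w_1$ are parallel, so a single $\u\perp\i$ places all three coefficients of $\A$ in $\A_2(\rr+\rr\u)$. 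Finally, for \emph{straight line $\Rightarrow\deg(\A)=0$}, constancy of $\A\,\i\,\A^*/|\A|^2$ combined with the fact that the stabilizer of $\i$ under quaternionic conjugation is $\rr+\rr\i$ forces $\A=\Q(c+d\,\i)$ with $\Q\in\hh$ a nonzero constant and $c,d\in\rr[\xi]$; Corollary~\ref{corollario} then reduces the rotation indicatrix of $\A$ to that of $c+d\,\i$, whose vanishing gives $c'd-cd'\equiv 0$, and coprimality of the real components of $\A$ forces $c,d$ to be constants, so $\deg(\A)=0$.
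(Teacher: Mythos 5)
Your proposal is correct, but it reaches the key implication (planar $\Rightarrow$ trivial) by a genuinely different route than the paper. The paper's proof is a coefficient-level analysis: it normalizes $\A_0=1$, introduces the indices $m\leq M\leq n$ measuring how far the initial coefficients stay in $\rr$ and in a plane $\rr+\rr\u$, expands the hodograph as $\sum_l\b_l\xi^l$ with $\b_l=\sum_k\A_k\i\A_{l-k}^*$, and uses the recursion \eqref{inductivestep} to show that $M<n$ forces $\A_{M+1}\perp\i$ with a nonzero $\u\,\i$-component, so that $\b_{M+1}$ escapes the plane $\rr\i+\rr\u\i$; the bound $\deg(\A)\leq2\Rightarrow$ trivial then falls out of the same bookkeeping ($M<n$ needs $n\geq3$). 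You instead rotate the plane of the hodograph onto $\i^\perp$ to get $|\alpha|^2\equiv|\beta|^2$ for $\tilde\A=\alpha+\beta\,\j$, factor in the UFD $\cc[\xi]$ to obtain $\beta=c\,\gamma\,\alpha_1^*$ with $|c|=1$, and then feed this back into Proposition~\ref{caso piano} to force $\alpha_1$ to have constant argument, yielding the explicit normal form $\tilde\A=\C_1(P+Q\,\u)$; the degree-$\leq2$ claim and the straight-line claim are handled by separate short computations. All the individual steps check out (the divisibility argument giving $\beta_1=c\,\alpha_1^*$, the cancellation of the $\gamma$-terms via additivity of the complex indicatrix, the identity $\C_1^*\i\,\C_1\in\rr\j+\rr\k$, the Wronskian arguments for $\alpha_1$ and for $c+d\,\i$, and the $p_0q_1=q_0p_1$ condition in degree $2$). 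The trade-off: the paper's coefficient machinery is reused verbatim in Section~\ref{sec:nontrivial} to classify $\mathscr{F}_0^{(3)}$ and $\mathscr{F}_0^{(4)}$ and directly exhibits which plane the curve lies in, whereas your factorization argument is more structural, produces the trivial normal form explicitly, and connects the planarity condition to the Hopf-map identity $|\alpha|^2=|\beta|^2$ familiar from the theory of helical PH curves.
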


\begin{proof}
Observe first that $\r(\xi)$ is planar if and only if the hodograph 
\eqref{hodo} ranges in a plane through the origin. Also, if $\tilde\A
(\xi)=\C\A(\xi)$ for nonzero $\C\in\hh$ and if $\A(\xi)\,\i\,\A^*(\xi)$ ranges 
in a plane $\Pi$ through the origin, then $\tilde\A(\xi)\,\i\,\tilde\A(\xi)$ 
ranges in the plane $C\,\Pi\,\C^*$ through the origin. Thus, we can argue
up to any (nonzero) constant quaternion factor. For any 
\[
\A(\xi) \,=\, \A_n\xi^n + \cdots + \A_1\xi + \A_0 
\,\in\, \mathscr{F}_0 \,, 
\] 
the fact that $\A(\xi)$ has coprime real components implies that $\A_0\neq0$. 
We may therefore assume, without loss of generality, that $\A_0=1$. Under 
this assumption, if we set
\begin{align*}
m &:= \max\{k : \A_0=1,\A_1,\ldots,\A_k \in \rr\} \\
M &:= \max\{k : \exists\ \u \mathrm{\ with\ } |\u|=1,
\u \perp\i\mathrm{\ s.t.\ } \A_0=1,\A_1,\ldots,\A_k \in \rr+\rr \u\}
\end{align*}
then $0\leq m \leq M \leq n$ and the theorem is equivalent to the following 
statements: $\A\,\i\,\A^*$ ranges in a plane through $0$ if and only if 
$M=n$; it ranges in a line through $0$ if and only if $m=M=n = 0$. We 
will prove both statements using the fact that, by direct computation,
\[
\A(\xi)\,\i\,\A(\xi)^* = \b_{2n}\xi^{2n}+\cdots+\b_1\xi+\b_0 \,, \quad
\b_l = \sum_{k=0}^l \A_k\i \A_{l-k}^* .
\]
Note that $\r'=\A\,\i\,\A^*$ ranges in a plane $\Pi$ through the origin 
if and only if $\r'$ and all its derivatives $\r^{(l)}$ for $l>1$ range in 
$\Pi$, and this is equivalent to stating that the pure vectors $\b_0,\ldots,
\b_{2n}$ all belong to $\Pi$.

\medskip\noindent
If $m=M=n$, then $\A_0,\ldots,\A_n \in \rr$. Since $\A$ has coprime real 
components, we deduce that $n=0$. 

\medskip\noindent
If $m<M=n$, then: (i) $\A_0=1,\ldots,\A_m \in \rr$; (ii) there exists a 
unit vector $\u \perp\i$ such that $\A_{m+1},\ldots,\A_n\in\rr+\rr\u$; 
and (iii) $\A_{m+1} \in (\rr+\rr\u)\setminus\rr$. By inspection, (i) implies 
that $\b_0,\ldots,\b_m\in\rr\i$, and (ii) implies that $\b_{m+1},\ldots, 
\b_{2n}$ all belong to the plane $\rr\i+\rr\u\i$. Moreover, 
\[
\b_{m+1} = \sum_{k=0}^{m+1} \A_k\i\A_{m+1-k}^* , 
\]
where all the summands belong to $\rr\i$ except $\A_0\i\A_{m+1}^*+
\A_{m+1}\i\A_0^*=2\A_{m+1}\i$, which is linearly independent of $\i$ by 
property (iii). Therefore, the span of $\b_0,\ldots,\b_{2n}$ is the plane 
$\Pi=\rr\i+\rr\u\i$. 

\medskip\noindent
If, on the other hand, $M < n$ we will prove that the span of $\b_0,\ldots,
\b_{2n}$ is the entire $3$-space $\rr\i+\rr\j+\rr\k$. By construction, the 
first coefficients $\A_0=1, \ldots, \A_m$ are real and there exists a $\u 
\perp\i$ such that $\A_{m+1}, \ldots, \A_M \in \rr+\rr \u$. Moreover, by 
hypothesis $\A_{M+1} \not \in \rr+\rr \u$. We now claim that $\A_{M+1}\perp\i$. 
Since $\A \in \mathscr{F}_0$ we have $c_k^{(n)}=0$ for all $k$. By applying 
the formulae~\eqref{inductivestep} several times, we conclude, in particular, 
that
\begin{align*}
0 = c_M^{(n)} = \ldots = c_M^{(M+1)} = 
c_M^{(M)} + \langle \A_0, \A_{M+1}\i \rangle = 
c_M^{(M)} - \langle \i, \A_{M+1} \rangle.
\end{align*}
The claim is thus equivalent to $c_M^{(M)}=0$, which is true since $\A_M\xi^M
+\cdots+\A_1\xi+\A_0$ belongs to $\mathscr{F}_0$ by Corollary~\ref{special}.

\medskip\noindent
By the claim, $\A_{M+1}$ has the form $x+y\,\u+z\,\u\,\i$ for some $x,y,z 
\in \rr$. We must have $z\neq0$, otherwise the maximality of $M$ would be 
contradicted. Moreover, $m<M$. Indeed, if $\A_0,\ldots,\A_M$ all lie 
in $\rr$ then $\A_0,\ldots,\A_{M+1}$ are all contained in the plane 
$\rr+\rr\,{\bf v}$, where
\[
{\bf v} := \frac{y\,\u+z\,\u\,\i}{\sqrt{y^2+z^2}} \,,
\]
and this again contradicts the maximality of $M$. As in the case $m<M=n$, 
we have $\b_0,\ldots,\b_m \in \rr\i$ while $\b_{m+1},\ldots,\b_M \in \rr\i+
\rr\u\i$ with $\b_{m+1}$ linearly independent of $\i$. Hence, the span of 
$\b_0,\ldots,\b_M$ is the entire plane $\rr\i +\rr\u\i$. Moreover, 
\[ 
\b_{M+1} = \sum_{k=0}^{M+1} \A_k\i \A_{M+1-k}^*
\] 
where all the terms of this sum belong to $\rr\i + \rr \u\i$, except for
$\A_0\i \A_{M+1}^*+\A_{M+1}\i \A_0^*=2(x\i-z\u+y\u\i)$. Since $z \neq 0$, 
we conclude that the span of $\b_0,\ldots,\b_{M+1}$ is the $3$--space of 
pure vectors $\rr\i+\rr\j+\rr\k$, as desired.

\medskip\noindent
Finally, consider the least degree $n$ for which the strict inequality 
$M < n$ is possible. We have seen that this implies (i) $0\leq m < M < n$; 
(ii) $\A_{m+1} \in (\rr+\rr\u)\setminus\rr$ with $\u \in \s, \u \perp\i$ 
and $\A_{M+1} \in (\rr+\rr\u+\rr\u\i)\setminus(\rr+\rr\u)$. By (i) we have 
$n\geq 2$. Moreover, $n\neq 2$ since $n=2$ and (ii) would imply that
\[
\A_1 \in  (\rr + \rr \u) \setminus \rr,\quad \A_{2} \in 
(\rr + \rr \u + \rr \u\i) \setminus (\rr + \rr \u)
\]
and this contradicts the condition
\[
0 = c_2^{(2)} = \langle \A_1, \A_2\i \rangle,
\]
which is necessary for $\A$ to belong to $\mathscr{F}_0$.
\end{proof}

The previous result yields a characterization of the planar curves 
among the set of all RRMF curves.

\begin{teo}\label{planarcharacterization}
Let $\A(\xi)\in\mathscr{F}$. Then the PH curve $\r(\xi)$ generated by
\eqref{hodo} is planar if and only if the core of $\A(\xi)$ is a trivial 
element of $\mathscr{F}_0$.
\end{teo}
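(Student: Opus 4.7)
The plan is to leverage the structure theorem (Theorem~\ref{G}) together with the already-established planarity criterion for elements of $\mathscr{F}_0$ (Theorem~\ref{teotrivial}), bridging them by the observation that right-multiplication of a quaternion polynomial by a complex polynomial merely rescales the hodograph by a nonnegative real polynomial, and hence preserves planarity.

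First I would write $\A = \C\,\delta$, with $\C$ the core of $\A$ and $\delta \in \widetilde{\cc[\xi]}$ (the condition $\A\in\widetilde{\hh[\xi]}$ ensures that both $\C$ and $\delta$ lie in their respective coprime-component classes). Since $\delta\in\cc$ commutes with $\i$, a direct computation gives
\[
\A\,\i\,\A^* \,=\, \C\,\delta\,\i\,\delta^*\,\C^* \,=\, |\delta|^2\,\C\,\i\,\C^* .
\]
Because $|\delta|^2$ is a nonnegative real polynomial that does not vanish identically, the hodograph $\A\,\i\,\A^*$ lies in a fixed $2$-plane through the origin of $\rr^3$ if and only if $\C\,\i\,\C^*$ does. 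Hence the PH curve $\r(\xi)$ is planar iff the PH curve generated by $\C$ is planar. This is the first reduction.

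Next, since $\A\in\mathscr{F}$, Theorem~\ref{G} supplies a $\B\in\mathscr{F}_0$ whose core is also $\C$, so $\B=\C\,\mu$ for some $\mu\in\widetilde{\cc[\xi]}$. The identical computation gives $\B\,\i\,\B^*=|\mu|^2\,\C\,\i\,\C^*$, so the PH curves generated by $\B$ and by $\C$ are simultaneously planar or non-planar. Applying Theorem~\ref{teotrivial} to $\B\in\mathscr{F}_0$ then translates planarity into the triviality of $\B$.

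To tie the two ends together, I would use Theorem~\ref{trivialhasnofactors}: any trivial element of $\mathscr{F}_0$ coincides with its own core. Thus, in the forward direction, planarity forces $\B$ to be trivial, whence $\B = \C$, so $\C$ is itself a trivial element of $\mathscr{F}_0$. In the reverse direction, if $\C$ is a trivial element of $\mathscr{F}_0$, one may take $\B:=\C$ as the witness in Theorem~\ref{G} (valid because $\C$ equals its own core by Theorem~\ref{trivialhasnofactors}), and the same chain of equivalences yields planarity of $\r$. I do not expect a serious obstacle; the only point that requires attention is verifying that the core factorization $\A=\C\delta$ indeed produces $\C\in\widetilde{\hh[\xi]}$ and $\delta\in\widetilde{\cc[\xi]}$, which follows routinely from Definition~\ref{core} and the coprimality hypothesis on the real components of $\A$.
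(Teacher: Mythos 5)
Your proposal is correct and follows essentially the same route as the paper: invoke Theorem~\ref{G} to obtain $\B\in\mathscr{F}_0$ sharing the core $\C$ of $\A$, observe that right complex factors only rescale the hodograph (so $\A$, $\B$, $\C$ generate simultaneously planar curves), and then apply Theorems~\ref{teotrivial} and~\ref{trivialhasnofactors} to conclude $\B=\C$ is trivial. The only difference is that you make explicit the computation $\A\,\i\,\A^*=|\delta|^2\,\C\,\i\,\C^*$ that the paper leaves as ``direct inspection of \eqref{hodo}.''
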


\begin{proof}
If $\A(\xi)\in\mathscr{F}$, then by Theorem~\ref{G} there exists $\B(\xi) 
\in \mathscr{F}_0$ with the same core $\C(\xi)$ as $\A(\xi)$, i.e.,
\[
\A(\xi) = \C(\xi) \alpha(\xi) , \quad \B(\xi) = \C(\xi) \beta(\xi)
\]
for some monic $\alpha(\xi),\beta(\xi) \in \widetilde{\cc[\xi]}$. By direct 
inspection of \eqref{hodo}, the PH curve generated by $\A$ is planar if and
only if the curve generated by $\B$ is planar. By Theorem~\ref{teotrivial}, 
the latter is equivalent to saying that $\B$ is a trivial element of 
$\mathscr{F}_0$. By Theorem~\ref{trivialhasnofactors}, $\B=\C$.
\end{proof}

We conclude this section by drawing some conclusions from the last result, 
making use of Definition~\ref{primitive}.

\begin{teo}\label{thm:sum}
Consider the split of $\mathscr{F}$ specified by
\[ 
\mathscr{F} = \mathscr{P} \cup \mathscr{N} \,,
\]
where $\mathscr{P}$ and $\mathscr{N}$ are the sets of polynomials 
$\A(\xi)\in\mathscr{F}$ that generate planar and non--planar RRMF curves, 
respectively. Then
\[
\mathscr{P}=\mathscr{P}_0\,\widetilde{\cc[\xi]}
\]
where $\mathscr{P}_0=\mathscr{P}\cap \mathscr{F}_0=\{\A \in\mathscr{F}_0 : 
\A\mathrm{\ is\ trivial}\}$. Moreover,
\[
\mathscr{N} = \left\{ \A(\xi)\in\widetilde{\hh[\xi]} : \A(\xi)
\mathrm{\ has\ the\ same\ core\ as\ some\ } \B(\xi) \in \mathscr{N}_0 \right\}.
\]
where $\mathscr{N}_0=\mathscr{N}\cap\mathscr{F}_0=\{\A\in\mathscr{F}_0 : 
\A\mathrm{\ is\ not\ trivial}\}$. Finally, a polynomial $\A(\xi) \in 
\widetilde{\hh[\xi]}$ generates a non-planar curve with a primitive hodograph 
$\r'(\xi)$ via~\eqref{hodo} if and only if it is the core of an element 
of $\mathscr{N}_0$.
\end{teo}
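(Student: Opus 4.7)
The plan is to deduce every assertion of the theorem from the two cornerstone results Theorem~\ref{G} and Theorem~\ref{planarcharacterization}, combined with the structural facts Theorem~\ref{teotrivial} and Theorem~\ref{trivialhasnofactors} on trivial elements of $\mathscr{F}_0$. The identities $\mathscr{P}_0=\{\A\in\mathscr{F}_0:\A\text{ is trivial}\}$ and $\mathscr{N}_0=\{\A\in\mathscr{F}_0:\A\text{ is not trivial}\}$ are immediate from Theorem~\ref{teotrivial}, so the real content of the theorem lies in the two displayed descriptions of $\mathscr{P}$ and $\mathscr{N}$ and in the final primitive--hodograph statement.

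For the factorization $\mathscr{P}=\mathscr{P}_0\,\widetilde{\cc[\xi]}$, the forward inclusion would take $\A\in\mathscr{P}\subset\mathscr{F}$ and invoke Theorem~\ref{G} to write $\A=\C\delta$ with $\C$ the core of some element of $\mathscr{F}_0$ and $\delta\in\widetilde{\cc[\xi]}$. Theorem~\ref{planarcharacterization} identifies the core of $\A$ with a trivial element of $\mathscr{F}_0$; since this core agrees with $\C$ up to a nonzero constant (a distinction harmless by Remark~\ref{uptoconstant}), we obtain $\C\in\mathscr{P}_0$. Conversely, given $\C\in\mathscr{P}_0$ and $\delta\in\widetilde{\cc[\xi]}$, Theorem~\ref{trivialhasnofactors} ensures that $\C$ equals its own core, so $\C$ is the core of $\C\delta$ as well; Theorem~\ref{G} then places $\C\delta$ in $\mathscr{F}$ and Theorem~\ref{planarcharacterization} puts it in $\mathscr{P}$.

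For the characterization of $\mathscr{N}$, the strategy is to apply Theorem~\ref{planarcharacterization} symmetrically to $\A$ and to an $\mathscr{F}_0$--witness with the same core. Starting from $\A\in\mathscr{N}$, Theorem~\ref{G} produces $\B\in\mathscr{F}_0$ with the same core $\C$ as $\A$. Theorem~\ref{planarcharacterization} applied to $\A$ shows that $\C$ is not a trivial element of $\mathscr{F}_0$; applied to $\B$ it then gives $\B\notin\mathscr{P}$, so $\B\in\mathscr{F}_0\cap\mathscr{N}=\mathscr{N}_0$. The converse is dual: if $\A\in\widetilde{\hh[\xi]}$ shares its core $\C$ with some $\B\in\mathscr{N}_0$, then $\A\in\mathscr{F}$ by Theorem~\ref{G}, non--planarity of $\B$ forces $\C$ to be non--trivial in $\mathscr{F}_0$, and a second application of Theorem~\ref{planarcharacterization} places $\A$ in $\mathscr{N}$.

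Finally, the primitive--hodograph assertion is a combination of the characterization of $\mathscr{N}$ just obtained with the remark following Definition~\ref{core}, namely that $\A\in\widetilde{\hh[\xi]}$ generates a primitive hodograph precisely when $\A$ coincides with its own core. Thus $\A\in\mathscr{N}$ has primitive hodograph if and only if the common core produced for $\A$ and some $\B\in\mathscr{N}_0$ is $\A$ itself, i.e., if and only if $\A$ is the core of an element of $\mathscr{N}_0$. The main subtlety I anticipate is bookkeeping of the inessential nonzero constant factors that appear whenever a complex polynomial in the statements of Theorem~\ref{G} or in the (monic) core fails to be monic; these must be absorbed via Remark~\ref{uptoconstant} to prevent a spurious mismatch between $\mathscr{P}_0$ and the core of an arbitrary element of $\mathscr{P}$.
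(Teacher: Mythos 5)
Your proposal is correct and follows the same route as the paper, whose own proof is just a two--sentence appeal to Theorem~\ref{planarcharacterization} (for the description of $\mathscr{P}$) and Theorem~\ref{G} (for the description of $\mathscr{N}$ and the primitive--hodograph statement); you have simply written out the details, including the correct use of Theorems~\ref{teotrivial} and~\ref{trivialhasnofactors} to identify $\mathscr{P}_0$ and $\mathscr{N}_0$ and to see that a trivial $\C\in\mathscr{F}_0$ is the core of $\C\delta$. The constant--factor bookkeeping you flag at the end is indeed the only loose thread, and it is harmless for exactly the reason you give.
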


\begin{proof}
The displayed expression for $\mathscr{P}$ is a restatement of 
Theorem~\ref{planarcharacterization}. The expression for $\mathscr{N}$,
and its specialization to the case of a primitive hodograph, follow 
immediately upon taking into account Theorem~\ref{G}.
\end{proof}

\section{Non-trivial polynomials in $\mathscr{F}_0$}
\label{sec:nontrivial}

In view of the importance of the non--trivial elements of $\mathscr{F}_0$,
highlighted in the previous section, we now study them in greater detail.
It is known that all elements of $\mathscr{F}_0^{(0)}$, $\mathscr{F}_0^{(1)}$, 
$\mathscr{F}_0^{(2)}$ are trivial. On the other hand, we show here that 
$\mathscr{F}_0^{(n)}$ admits non--trivial elements for any $n\geq 3$. 
We begin by deriving complete characterizations for $\mathscr{F}_0^{(3)}$ 
and $\mathscr{F}_0^{(4)}$.

\begin{teo}\label{degree3}
The non--trivial elements of $\mathscr{F}_0^{(3)}$ are those polynomials 
\[
\A(\xi) \,=\, \C\,(\A_3\xi^3+\A_2\xi^2+\A_1\xi+1)
\]
where $\C\in\hh$ is nonzero, and $\A_1, \A_2, \A_3 \in \hh$ are such that:
\begin{itemize}
\item the span of $1,\A_1,\A_2$ is $\rr+\rr\j + \rr\k$
\item the vector part of $\A_3$ is the pure vector, parallel to the vector 
product $(\A_1\i)\times(\A_2\i)$, whose component along $\i$ is $\frac 13
\langle \A_1,\A_2\i\rangle$.
\end{itemize}
\end{teo}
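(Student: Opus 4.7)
The plan is to combine the second characterization of $\mathscr{F}_0$ (the scalar conditions $c_m^{(3)}=0$) with the freedom, provided by Remark~\ref{uptoconstant}, to strip off a left quaternion factor. Because the components of $\A$ are coprime real polynomials, the constant term $\A_0$ cannot be zero (otherwise $\xi$ would divide every real component), so I absorb $\A_0$ into the left factor $\C$ and work with the normalized form $\A_3\xi^3+\A_2\xi^2+\A_1\xi+1$. Expanding the $c_m^{(3)}$ directly (or via the inductive formulae \eqref{inductivestep}) and repeatedly applying the identities $\langle\X,\X\i\rangle=0$ and $\langle\X,\Y\i\rangle=-\langle\X\i,\Y\rangle$, which are immediate consequences of right multiplication by $\i$ being an orthogonal isometry of $\hh\cong\rr^4$, the five equations $c_0^{(3)}=\cdots=c_4^{(3)}=0$ reduce to: $\A_1\perp\i$, $\A_2\perp\i$, $\langle\A_3,\i\rangle=\tfrac13\langle\A_1,\A_2\i\rangle$, $\langle\A_1,\A_3\i\rangle=0$, and $\langle\A_2,\A_3\i\rangle=0$.

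I would then reinterpret these conditions geometrically. The first two put $\A_1,\A_2\in\rr+\rr\j+\rr\k$, so that $\A_1\i,\A_2\i$ are pure vectors. Using $\langle\A_k,\A_3\i\rangle=-\langle\mathrm{vec}(\A_3),\A_k\i\rangle$ (the scalar part of $\A_3$ is automatically orthogonal to the pure vectors $\A_k\i$), the last two conditions say $\mathrm{vec}(\A_3)$ lies in the orthogonal complement of $\mathrm{span}(\A_1\i,\A_2\i)$ inside the three-dimensional pure-vector space. The span hypothesis on $1,\A_1,\A_2$ is equivalent to the linear independence of $\A_1\i$ and $\A_2\i$ as pure vectors, so this orthogonal complement is precisely the line spanned by $(\A_1\i)\times(\A_2\i)$. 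A short calculation shows the $\i$-component of this cross product equals $\langle\A_1,\A_2\i\rangle$, and the third condition then fixes the scalar to $\tfrac13$, yielding $\mathrm{vec}(\A_3)=\tfrac13(\A_1\i)\times(\A_2\i)$, which is exactly the description claimed in the theorem.

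The main obstacle is to link the span condition to non-triviality and to establish coprimeness of the real components in the converse direction; both rest on the fact that any two-dimensional subalgebra $\rr+\rr\u'$ of $\hh$ with $\u'\perp\i$ a unit vector is a copy of $\cc$, and is therefore closed under inversion. If $\A$ were trivial in normalized form, say $\A=\C'\tilde\A$ with $\tilde\A$ having coefficients in $\rr+\rr\u'$, then matching constant terms would give $\C'=\tilde\A_0^{-1}\in\rr+\rr\u'$, so every $\A_k\in\rr+\rr\u'$ and $\mathrm{span}(1,\A_1,\A_2)\subseteq\rr+\rr\u'$ would be at most two-dimensional. Conversely, if the span is at most two-dimensional and contained in some $\rr+\rr\u'$, then since $\rr+\rr\u'\perp\rr\i+\rr\u'\i$ as subspaces of $\hh$, one gets $\langle\A_1,\A_2\i\rangle=0$ and $(\A_1\i)\times(\A_2\i)\in\rr\u'$, so $\A_3\in\rr+\rr\u'$ and $\A$ is trivial. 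Finally, for coprimeness in the backward direction: any shared real polynomial factor $f(\xi)$ of positive degree would divide $\A$ in $\hh[\xi]$, and by Corollary~\ref{corollario} the quotient would again lie in $\mathscr{F}_0$ in degree $\leq 2$, hence be trivial by the known triviality of $\mathscr{F}_0^{(0)},\mathscr{F}_0^{(1)},\mathscr{F}_0^{(2)}$; reabsorbing the real factor $f$ would make $\A$ itself trivial, contradicting the span hypothesis.
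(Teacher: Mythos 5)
Your proposal is correct and follows essentially the same route as the paper's proof: normalize to $\A_0=1$ via Remark~\ref{uptoconstant}, expand the conditions $c_0^{(3)}=\cdots=c_4^{(3)}=0$ using \eqref{inductivestep}, reinterpret them as orthogonality constraints on $\A_1,\A_2,\A_3$, and split into the cases where $1,\A_1,\A_2$ do or do not span $\rr+\rr\j+\rr\k$. Your two supplements --- the explicit argument that a full span forces non-triviality (since $\rr+\rr\u'$ is closed under inversion) and the verification that the constructed coefficients yield coprime real components --- are points the paper leaves implicit, and both are sound.
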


\begin{proof}
By Remark~\ref{uptoconstant}, we need only identify the polynomials 
$\A(\xi) = \A_3\xi^3 + \A_2\xi^2 + \A_1\xi + 1$ that are non--trivial elements 
of $\mathscr{F}_0^{(3)}$. According to equations \eqref{inductivestep}, such 
an $\A(\xi)$ belongs to $\mathscr{F}_0$ if and only if
\ba
\label{F03}
0 \!\! &=& \!\!c_{0}^{(3)} = c_{0}^{(2)} = c_{0}^{(1)} = 
\langle\A_0,\A_1\i\rangle = -\langle\i,\A_1\rangle \nonumber \\
0 \!\! &=& \!\!c_{1}^{(3)} = c_{1}^{(2)} = 
2\langle\A_0,\A_2\i\rangle = -2\langle\i,\A_2\rangle \nonumber \\
0 \!\! &=& \!\!c_{2}^{(3)} = c_{2}^{(2)} + 3 \langle\A_0,\A_3\i\rangle 
= \langle\A_1,\A_2\i\rangle - 3 \langle\i,\A_3\rangle \\
0 \!\! &=& \!\! c_{3}^{(3)} = 2 \langle\A_1,\A_3\i\rangle 
= -2 \langle\A_1\i,\A_3\rangle \nonumber \\
0 \!\! &=& \!\!c_{4}^{(3)} = \langle\A_{2},\A_3\i\rangle 
= - \langle\A_2\i,\A_3\rangle. \nonumber
\ea
The preceding equations are equivalent to the following conditions:
\begin{itemize}
\item[(i)\phantom{ii}] $\A_1,\A_2 \in \rr+\rr\j + \rr\k$
\item[(ii)\phantom{i}] the component of $\A_3$ along $\i$ is 
$\frac 13\langle\A_1,\A_2\i\rangle$
\item[(iii)] $\A_3$ is orthogonal to both $\A_1\i$ and $\A_2\i$
\end{itemize}
If $1,\A_1,\A_2$ span the entire space $\rr+\rr\j +\rr\k$, then $\A(\xi)$ is 
not trivial. Moreover, in this case the pure vectors $\A_1\i$ and $\A_2\i$ 
are linearly independent so that $\A_3 \perp \A_1\i,\A_2\i$ if and only if 
the vector part of $\A_3$ is parallel to $(\A_1\i)\times(\A_2\i)$.

\medskip\noindent
If, on the other hand, $1,\A_1,\A_2$ do not span the entire space $\rr+
\rr\j + \rr\k$, then their span is included in some plane $\rr+\rr \u$ for
some unit vector $\u$ with $\u\perp\i$. But then condition (ii) becomes 
$\A_3 \perp\i$. Along with condition (iii), this implies that $\A(\xi)$ is 
trivial.
\end{proof} 

\begin{ese}
The polynomial $-\frac{1}{3}\i\,\xi^3+\j\,\xi^2+\k\,\xi+1$ is a non--trivial 
element of $\mathscr{F}_0^{(3)}$.
\end{ese}

A completely analogous argument yields the following result.

\begin{teo}
A monic polynomial $\xi^3+\A_2\xi^2+\A_1\xi+\A_0$ is a non--trivial element 
of $\mathscr{F}_0^{(3)}$ if and only if the span of $1,\A_1,\A_2$ is $\rr+
\rr\j+\rr\k$ and the vector part of $\A_0$ is the unique vector parallel to 
the vector product $(\A_1\i)\times(\A_2\i)$ whose $\i$ component is equal 
to $-\frac 13\langle \A_1,\A_2\i\rangle$.
\end{teo}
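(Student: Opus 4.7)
My plan is to mimic the proof of Theorem~\ref{degree3} with the normalization $\A_3=1$ in place of the constant-term normalization $\A_0=1$ used there. Writing the coefficient equations $c_m^{(3)}=0$ for $m=0,\ldots,4$ via the inductive formulas \eqref{inductivestep} and simplifying using the antisymmetry $\langle X,Y\i\rangle=-\langle Y,X\i\rangle$, I expect the five conditions to collapse to
\[
\langle \A_1,\i\rangle=0,\ \ \langle \A_2,\i\rangle=0,\ \ \langle \A_0,\A_1\i\rangle=0,\ \ \langle \A_0,\A_2\i\rangle=0,\ \ \langle \A_0,\i\rangle=-\tfrac{1}{3}\langle \A_1,\A_2\i\rangle.
\]
The first two read exactly as $\A_1,\A_2\in\rr+\rr\j+\rr\k$, while the remaining three determine the vector part of $\A_0$: its $\i$-component is forced to be $-\tfrac{1}{3}\langle \A_1,\A_2\i\rangle$, and its $\j,\k$-components must be those of a pure vector orthogonal to both $\A_1\i$ and $\A_2\i$ (the real part of $\A_0$ remaining free, as expected since it does not enter any of the inner products above).

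When $1,\A_1,\A_2$ span $\rr+\rr\j+\rr\k$, the pure vectors $\A_1\i$ and $\A_2\i$ are linearly independent, so the orthogonal complement of their span inside $\rr\i+\rr\j+\rr\k$ is the line $\rr\cdot(\A_1\i)\times(\A_2\i)$, and the $\i$-component constraint then pins $\vect(\A_0)$ uniquely to the multiple of $(\A_1\i)\times(\A_2\i)$ asserted in the statement.

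For the non-triviality equivalence, I exploit the fact that the leading coefficient is $1\in\rr$: any factorization $\A(\xi)=\C\tilde\A(\xi)$ witnessing triviality must satisfy $\C\tilde\A_3=1$ with $\tilde\A_3\in\rr+\rr\u$, forcing $\C=\tilde\A_3^{-1}\in\rr+\rr\u$ and hence every coefficient of $\A$ to lie in the commutative subalgebra $\rr+\rr\u$ of $\hh$. Thus triviality is equivalent to the existence of a unit vector $\u\perp\i$ with $1,\A_1,\A_2,\A_0\in\rr+\rr\u$. If $1,\A_1,\A_2$ span all of $\rr+\rr\j+\rr\k$, no such $\u$ can exist. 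Conversely, if $1,\A_1,\A_2$ lie in some $\rr+\rr\u$ with $\u\perp\i$, a direct computation using the orthogonality of $\{1,\u,\i,\u\i\}$ yields $\langle \A_1,\A_2\i\rangle=0$ and shows the orthogonal complement of $\mathrm{span}(\A_1\i,\A_2\i)$ among the pure vectors collapses to $\rr\u$, whence $\A_0\in\rr+\rr\u$ and $\A$ is trivial. The only obstacle is the careful bookkeeping in translating the $c_m^{(3)}=0$ equations under the monic normalization, but this is entirely routine given the inductive formulas \eqref{inductivestep} already established.
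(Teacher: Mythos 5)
Your proposal is correct and is exactly the ``completely analogous argument'' the paper invokes: you rerun the proof of Theorem~\ref{degree3} with the leading coefficient normalized to $1$ instead of the constant term, and your five conditions $c_0^{(3)}=\dots=c_4^{(3)}=0$ reduce correctly to $\A_1,\A_2\perp\i$, $\A_0\perp\A_1\i,\A_2\i$, and $\langle\A_0,\i\rangle=-\frac13\langle\A_1,\A_2\i\rangle$. Your handling of non-triviality via the real leading coefficient forcing $\C\in\rr+\rr\u$ matches the spirit of the paper's argument, so nothing further is needed.
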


A system of constraints on the Bernstein coefficients of cubic polynomials 
$\A(\xi)$, that identifies elements of $\mathscr{F}_0^{(3)}$ and is equivalent 
to the conditions (\ref{F03}), was previously derived in scalar form in 
\cite{choi02a}, and in quaternion form in \cite{farouki13}. Consider now 
the case of polynomials $\A(\xi)$ of degree $4$.

\begin{teo}\label{degree4}
The elements of $\mathscr{F}_0^{(4)}$ are those polynomials 
\[
\A(\xi) \,=\, \C(\A_4\xi^4 +\A_3\xi^3+\A_2\xi^2+\A_1\xi+1)
\]
where $\C\in\hh$ is nonzero, and $\A_1,\A_2,\A_3,\A_4\in\hh$ are such that:
\begin{itemize}
\item $\A_1,\A_2\in\rr+\rr\j+\rr\k$;
\item the component of $\A_3$ along $\i$ is $\frac13\langle\A_1,\A_2\i\rangle$;
\item $\A_4$ is orthogonal to $\A_2\i$ and $\A_3\i$, its component along $\i$ 
is $\frac 12\langle\A_1,\A_3\i\rangle$, and  $\langle\A_1,\A_4\i\rangle=\frac 
13\langle \A_2,\A_3\i\rangle$.
\end{itemize}
Moreover, $\A(\xi)$ is non--trivial if and only if one of the following 
conditions is satisfied:
\begin{enumerate}
\item the span of $1,\A_1,\A_2$ is $\rr+\rr\j + \rr\k$;
\item the span of $1,\A_1,\A_2$ is a plane $\rr+\rr\u$ for some unit vector  
$\u$ with $\u\perp\i$, and the span of $1,\A_1,\A_2,\A_3$ is $\rr+\rr\j+\rr\k$.
\end{enumerate}
\end{teo}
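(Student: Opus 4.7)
The strategy mirrors the proof of Theorem~\ref{degree3}. By Remark~\ref{uptoconstant}, and since $\A_0\neq 0$, I may assume $\C=1$ and $\A_0=1$, reducing to the case $\A(\xi)=\A_4\xi^4+\A_3\xi^3+\A_2\xi^2+\A_1\xi+1$. The condition $\A\in\mathscr{F}_0$ then unfolds into the equations $c_m^{(4)}=0$ for $m=0,\dots,6$ via \eqref{inductivestep}, and a case analysis on the dimensions of certain spans characterizes triviality.

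For $m=0,1,2$ the identity $c_m^{(4)}=c_m^{(3)}$ reproduces the cubic constraints, namely $\A_1,\A_2\in\rr+\rr\j+\rr\k$ and $\langle\i,\A_3\rangle=\frac13\langle\A_1,\A_2\i\rangle$. The new conditions come from \eqref{inductivestep} applied to the degree-$4$ terms, which, using the values of $c_3^{(3)}$ and $c_4^{(3)}$ computed in the proof of Theorem~\ref{degree3}, give
\begin{align*}
c_3^{(4)} &= 2\langle\A_1,\A_3\i\rangle + 4\langle 1,\A_4\i\rangle, &
c_4^{(4)} &= \langle\A_2,\A_3\i\rangle + 3\langle\A_1,\A_4\i\rangle,\\
c_5^{(4)} &= 2\langle\A_2,\A_4\i\rangle, &
c_6^{(4)} &= \langle\A_3,\A_4\i\rangle.
\end{align*}
Setting these four quantities to zero recovers the stated conditions on $\langle\i,\A_4\rangle$, on $\langle\A_1,\A_4\i\rangle$, and on the orthogonality of $\A_4$ to $\A_2\i$ and $\A_3\i$.

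For the triviality statement, Definition~\ref{trivial} together with the normalization $\A_0=1$ translates to: $\A$ is trivial if and only if $1,\A_1,\A_2,\A_3,\A_4$ all lie in a common plane $\rr+\rr\u$ for some unit vector $\u\perp\i$. Since $\A_1,\A_2\in\rr+\rr\j+\rr\k$, the span of $1,\A_1,\A_2$ is contained in $\rr+\rr\j+\rr\k$. Under condition~(1) this span already has dimension $3$, and under condition~(2) it has dimension $2$ but is enlarged to dimension $3$ by $\A_3$; in either case no $2$-plane $\rr+\rr\u$ contains all of $1,\A_1,\A_2,\A_3$, so $\A$ is non-trivial.

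Conversely, assuming neither condition holds, $1,\A_1,\A_2,\A_3$ all lie in a common plane $\rr+\rr\u$ with $\u\in\rr\j+\rr\k$, and the task is to show that the constraints on $\A_4$ force $\A_4\in\rr+\rr\u$. Since $\A_1\in\rr+\rr\u$ and $\A_3\i\in\rr\i+\rr\u\i$ lie in orthogonal subspaces, $\langle\A_1,\A_3\i\rangle=0$, whence $\A_4\perp\i$ and $\A_4\in\rr+\rr\j+\rr\k$. Writing $\A_2=s_0+s\u$ and $\A_3=t_0+t\u$, the orthogonality of $\A_4$ to both $\A_2\i$ and $\A_3\i$, combined with $\A_4\perp\i$, reduces to $s\langle\A_4,\u\i\rangle=t\langle\A_4,\u\i\rangle=0$. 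If $(s,t)\neq(0,0)$ this yields $\A_4\perp\u\i$; since $\{\u,\u\i\}$ is an orthogonal basis of $\rr\j+\rr\k$, we conclude $\A_4\in\rr+\rr\u$. In the degenerate sub-case $\A_2,\A_3\in\rr$, the hypothesis that the span is still $2$-dimensional forces $\A_1\in(\rr+\rr\u)\setminus\rr$, and then the equation $c_4^{(4)}=0$ becomes $\langle\A_1,\A_4\i\rangle=0$, which again pins the vector part of $\A_4$ to be parallel to $\u$. The main obstacle is precisely this final bookkeeping step: checking systematically that every sub-configuration outside conditions~(1) and~(2) really does admit $\A_4$ in the same plane $\rr+\rr\u$.
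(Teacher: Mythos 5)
Your proof is correct and follows essentially the same route as the paper's: normalize to $\C=1$, $\A_0=1$ via Remark~\ref{uptoconstant}, read off the conditions from $c_0^{(4)}=\cdots=c_6^{(4)}=0$ using \eqref{inductivestep}, and settle triviality by the case analysis on the spans of $1,\A_1,\A_2$ and $1,\A_1,\A_2,\A_3$ (note your $c_4^{(4)}=0$ correctly yields $\langle\A_1,\A_4\i\rangle=-\frac13\langle\A_2,\A_3\i\rangle$, i.e.\ $\langle\A_1\i,\A_4\rangle=\frac13\langle\A_2,\A_3\i\rangle$, which exposes a sign slip in the theorem's statement rather than any error in your argument). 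The one sub-configuration your closing ``bookkeeping'' caveat leaves open, namely $\A_1,\A_2,\A_3$ all real, is immediate --- the constraints then force only $\A_4\perp\i$, so one chooses $\u$ along the vector part of $\A_4$ --- and the paper's own proof is no more explicit at that point.
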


\begin{proof}
As in the proof of Theorem~\ref{degree3}, it suffices to consider $\A(\xi)= 
\A_3\xi^3+\A_2\xi^2+\A_1\xi+1$. By equations \eqref{inductivestep}, we have 
$\A(\xi)\in \mathscr{F}_0^{(4)}$ if and only if
\begin{align*}
&0=c_{0}^{(4)}=c_{0}^{(3)} = c_{0}^{(2)} = c_{0}^{(1)} = 
\langle \A_0,\A_1\i \rangle = -\langle \i,\A_1\rangle \\
&0=c_{1}^{(4)}=c_{1}^{(3)} = c_{1}^{(2)} = 
2\langle \A_0,\A_2\i \rangle = -2\langle \i,\A_2\rangle \\
&0=c_{2}^{(4)}=c_{2}^{(3)} = c_{2}^{(2)} + 3 \langle \A_0,\A_3\i\rangle 
= \langle \A_1, \A_2\i \rangle - 3 \langle \i,\A_3\rangle \\
&0=c_{3}^{(4)}= c_{3}^{(3)} + 4 \langle \A_0,\A_4\i\rangle
= 2 \langle \A_1,\A_3\i\rangle -4 \langle \i,\A_4\rangle \\
&0=c_{4}^{(4)}=c_{4}^{(3)} + 3 \langle \A_1,\A_4\i\rangle
= \langle \A_{2},\A_3\i\rangle - 3 \langle \A_1\i,\A_4\rangle \\
&0=c_{5}^{(4)} = 2 \langle \A_2,\A_4\i\rangle = -2 \langle \A_2\i,\A_4\rangle\\
&0=c_{6}^{(4)} = \langle \A_{3},\A_4\i\rangle = - \langle \A_3\i,\A_4\rangle.
\end{align*}
Moreover, if $1,\A_1,\A_2$ span the entire space $\rr+\rr\j+\rr\k$ then 
$\A(\xi)$ is not trivial. If, on the other hand, the span of $1,\A_1,\A_2$ 
is included in some plane $\rr+\rr\u$, where $\u$ is a unit vector with
$\u \perp\i$, then $0=c_{2}^{(4)}$ implies the $\A_3 \perp\i$. Under this 
assumption, either $1,\A_1,\A_2,\A_3$ span the entire space $\rr+\rr\j+\rr\k$ 
or $\A_3 \in \rr+\rr\u$ as well. In the former case, $\A(\xi)$ is clearly 
not trivial. In the latter case, $0=c_{3}^{(4)}$ implies that $\A_4 \perp\i$ 
and, along with $0=c_{4}^{(4)}=c_{5}^{(4)}=c_{6}^{(4)}$, this implies that 
$\A(\xi)$ is trivial.
\end{proof} 

Examples of both types of non--trivial elements described in Theorem~\ref
{degree4} are exhibited below.

\begin{ese}
The polynomial $(-1+\frac{1}{3}\k)\xi^4+(\frac{1}{3}\i+\j)\xi^3+ \k\,\xi^2+ 
\j\,\xi+1$ is a non--trivial element of $\mathscr{F}_0^{(4)}$.
\end{ese}

\begin{ese}
The polynomial $2\,\i\,\xi^4+4\,\k\,\xi^3+\j\,\xi+1$ is a non--trivial element 
of $\mathscr{F}_0^{(4)}$.
\end{ese}

We conclude with a result concerning polynomials of arbitrary degree.

\begin{teo}
For all $n\geq 3$, the set $\mathscr{F}_0^{(n)}$ contains non--trivial elements.
\end{teo}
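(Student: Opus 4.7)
The plan is to construct an explicit non-trivial element of $\mathscr{F}_0^{(n)}$ for every $n\geq 3$. I propose
\[
\A(\xi) \,=\, 1 \,+\, \j\,\xi \,+\, \frac{n}{n-2}\,\k\,\xi^{n-1} \,+\, \i\,\xi^n \,.
\]
Its degree is exactly $n$ because the leading coefficient $\i$ is nonzero, and its real components are coprime (the constant $1$ is one of them), so $\A\in\widetilde{\hh[\xi]}$.

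To verify that $\A\in\mathscr{F}_0$ I would apply Proposition~\ref{caso piano}: write $\A=\alpha+\beta\,\j$ with
\[
\alpha(\xi) \,=\, 1 + \i\,\xi^n \,, \qquad \beta(\xi) \,=\, \xi + \frac{n}{n-2}\,\i\,\xi^{n-1} \,,
\]
and check directly that
\[
\langle\alpha'\i,\alpha\rangle \,=\, -n\,\xi^{n-1} \,=\, \langle\beta'\i,\beta\rangle \,.
\]
These two quantities are simply the Wronskians of the real and imaginary parts of $\alpha$ and of $\beta$, and the coefficient $\frac{n}{n-2}$ was chosen precisely to force this identity (which is why the construction requires $n\geq 3$).

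For non-triviality, suppose by contradiction that $\A=\C\,\tilde\A(\xi)$ for some nonzero $\C\in\hh$, some unit vector $\u\perp\i$, and some polynomial $\tilde\A(\xi)=\sum_k \tilde\A_k\,\xi^k$ with every $\tilde\A_k\in\rr+\rr\u$. Since $\u^2=-1$, the plane $\rr+\rr\u$ is a commutative subfield of $\hh$ isomorphic to $\cc$, hence closed under multiplication and inversion. The identity $\A_0=1=\C\,\tilde\A_0$ gives $\C=\tilde\A_0^{-1}\in\rr+\rr\u$, so every $\A_k=\C\,\tilde\A_k$ also lies in $\rr+\rr\u$. But $\rr+\rr\u\subseteq\rr+\rr\j+\rr\k$ is orthogonal to $\i$, which contradicts $\A_n=\i$.

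The main difficulty is choosing an ansatz that is simple enough for the Wronskian identity of Proposition~\ref{caso piano} to be checked by inspection, yet still contains a coefficient along $\i$ so that the non-triviality argument goes through in the clean way above; the four-term polynomial written down, with $\alpha$ and $\beta$ having only two nonzero terms each, manages both.
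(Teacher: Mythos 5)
Your construction is correct and is essentially the paper's own: the paper exhibits $(n-2)\,\i\,\xi^n+n\,\k\,\xi^{n-1}+\j\,\xi+1$, which lies in the same one-parameter family $a\,\i\,\xi^n+b\,\k\,\xi^{n-1}+\j\,\xi+1$ with $na=(n-2)b$ as your polynomial, and it verifies membership in $\mathscr{F}_0$ by the same one-line cancellation (computed directly on $\langle\A'\i,\A\rangle$ rather than via Proposition~\ref{caso piano}). Your non-triviality argument correctly fills in the paper's terser observation that the constant term is $1$ while the leading coefficient is not orthogonal to $\i$.
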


The theorem is established by means of the following example.

\begin{ese}
For each $n\geq 3$, the polynomial $\A(\xi)=(n-2)\,\i\,\xi^n+n\,\k\,\xi^{n-1}
+\j\,\xi+1$ is a non--trivial element of $\mathscr{F}_0^{(n)}$. Since $\A'(\xi)
\,\i=-n(n-2)\,\xi^{n-1}+n(n-1)\,\j\,\xi^{n-2}-\k$, we have
\[
\langle \A'(\xi)\i,\A(\xi) \rangle \,=\, 
(-n(n-2)+n(n-1)-n)\,\xi^{n-1} \,\equiv\, 0 \,,
\]
and hence $\A(\xi) \in \mathscr{F}_0^{(n)}$. Moreover, $\A(\xi)$ is 
non--trivial, since its constant term is $1$ and its leading coefficient 
is not orthogonal to $\i$.
\end{ese}

\section{Examples with non-vanishing indicatrix}
\label{sec:exm}

To complement the examples of RRMF curves generated by elements of
$\mathscr{F}_0$ in the previous section, the present section presents 
in greater detail examples of some non--planar RRMF curves generated by 
polynomials $\A(\xi)\in\mathscr{F}\setminus\mathscr{F}_0$. These examples 
show that the characterization of RRMF curves developed herein accommodates 
not only the generic case, in which \eqref{rrmfeqn} is satisfied with
$\deg(u^2+v^2+p^2+q^2)=\deg(a^2+b^2)$, but also cases where this does 
not hold. 

The chronological development of solutions to 
\eqref{rrmfeqn} may be summarized as follows. Choi and Han \cite{choi02a} 
first identified a family of degree 7 PH curves satisfying \eqref{rrmfeqn} 
with $\deg(u^2+v^2+p^2+q^2)=6$ and $\deg(a^2+b^2)=0$, for which the ERF is 
itself an RMF, i.e., the rotation \eqref{f23} is not required. Subsequently,
Han \cite{han08} demonstrated that no true spatial PH cubics can satisfy 
\eqref{rrmfeqn}. A family of spatial PH quintics satisfying \eqref{rrmfeqn} 
with $\deg(u^2+v^2+p^2+q^2)=\deg(a^2+b^2)=4$ was then identified in \cite
{farouki09}, and a much--simplified characterization of these quintic RRMF 
curves was developed in \cite{farouki10a}. Furthermore, a characterization 
of RRMF curves of \emph{any} degree, that satisfy \eqref{rrmfeqn} with 
$\deg(u^2+v^2+p^2+q^2)=\deg(a^2+b^2)$, was formulated as a polynomial 
divisibility condition in \cite{farouki10b}. Namely, the condition \eqref
{rrmfeqn} is satisfied if and only if the polynomials 
\ba
\rho \!\! &=& \!\! (up'-u'p+vq'-v'q)^2 + (uq'-u'q-vp'+v'p)^2 \,,  
\nonumber \\
\eta \!\! &=& \!\! (uu'+vv'+pp'+qq')^2 + (uv'-u'v-pq'+p'q)^2 \,,
\nonumber
\ea
are both divisible\footnote{Observe that $\rho+\eta=(u^2+v^2+p^2+q^2)\,
(u'^2+v'^2+p'^2+q'^2)$, so divisibility of either $\rho$ or $\eta$ by 
$\sigma$ implies divisibility of the other.} by $\sigma=u^2+v^2+p^2+q^2$. 

A family of RRMF quintics satisfying \eqref{rrmfeqn} with 
$\deg(u^2+v^2+p^2+q^2)=4$ and $\deg(a^2+b^2)=2$ was identified in \cite
{farouki12b}. Although it was stated in \cite{farouki12b} that solutions 
to \eqref{rrmfeqn} with $\deg(u^2+v^2+p^2+q^2)<\deg(a^2+b^2)$ are not 
possible, a family of RRMF quintics satisfying \eqref{rrmfeqn} with 
$\deg(u^2+v^2+p^2+q^2)=4$ and $\deg(a^2+b^2)=6$ has recently been
identified by Cheng and Sakkalis \cite{cheng16}. The following examples 
illustrate the existence of quintic RRMF curves that are proper space 
curves, and satisfy \eqref{rrmfeqn} with $\deg(a^2+b^2)$ less than, equal 
to, and greater than $\deg(u^2+v^2+p^2+q^2)=4$.

\begin{ese}
Consider the hodograph $\r'(\xi)=(x'(\xi),y'(\xi),z'(\xi))$ defined by 
\eqref{hodo}, where the quaternion polynomial \eqref{A} has the components
\[
u(\xi) = 21\,\xi^2+21\,\xi-142 \,, \;\;
v(\xi) = -\,21\,\xi-63 \,,
\]
\[
p(\xi) = 42\,\xi-34 \,, \;\;
q(\xi) = -\,42\,\xi+94 \,.
\]
Substituting these polynomials into \eqref{hodo} gives
\ba
x'(\xi) \!\! &=& \!\! 
441\,\xi^4+882\,\xi^3-8610\,\xi^2+7434\,\xi+14141 \,, 
\nonumber \\
y'(\xi) \!\! &=& \!\! -\,1764\,\xi^3+420\,\xi^2+12012\,\xi-22412 \,,
\nonumber \\
z'(\xi) \!\! &=& \!\! -\,1764\,\xi^3+1428\,\xi^2+14700\,\xi-21500 \,, 
\nonumber
\ea
Since $\gcd_\rr(x'(\xi),y'(\xi),z'(\xi))=1$ this is a primitive hodograph,
satisfying the Pythagorean condition \eqref{pythag} with parametric speed
\[
\sigma(\xi) = 21(21\,\xi^2+126\,\xi+325)(\xi^2-4\,\xi+5) \,.
\]
Note that $\r(\xi)$ it a true space curve, since $(\r'(\xi)\times\r''(\xi))
\cdot\r'''(\xi)\not\equiv0$, and the RRMF condition \eqref{rrmfeqn} is 
satisfied by polynomials $a(\xi)$, $b(\xi)$ with $\deg(a^2+b^2)=2$, namely
\[
a(\xi) = \xi-2 \,, \quad
b(\xi) = -1 \,.
\]
Note that $\gcd_\rr(uv'-u'v-pq'+p'q,u^2+v^2+p^2+q^2)=441\,\xi^2+2646\,\xi+
6825$, so a cancellation occurs on the left in \eqref{rrmfeqn}, and we have
\[
\frac{uv'-u'v-pq'+p'q}{u^2+v^2+p^2+q^2} \,=\, \frac{ab'-a'b}{a^2+b^2} 
\,=\, \frac{1}{\xi^2-4\,\xi+5} \,.
\]
\end{ese}

\begin{ese}
\label{example2}
Substituting the components 
\[
u(\xi) = 7\,\xi^2-22\,\xi+10 \,, \;\;
v(\xi) = -\,19\,\xi^2+14\,\xi \,, 
\]
\[
p(\xi) = -\,26\,\xi^2+16\,\xi \,, \;\;
q(\xi) = -\,2\,\xi^2+12\,\xi \,.
\]
for the quaternion polynomial \eqref{A} into \eqref{hodo} yields
\ba
x'(\xi) \!\! &=& \!\! 
-\,270\,\xi^4+40\,\xi^3+420\,\xi^2-440\,\xi+100 \,, 
\nonumber \\
y'(\xi) \!\! &=& \!\! 960\,\xi^4-1080\,\xi^3-120\,\xi^2+240\,\xi \,,
\nonumber \\
z'(\xi) \!\! &=& \!\! 440\,\xi^4-1880\,\xi^3+1560\,\xi^2-320\,\xi \,.
\nonumber
\ea
Since $\gcd_\rr(x'(\xi),y'(\xi),z'(\xi))=1$ this is a primitive hodograph,
satisfying the Pythagorean condition \eqref{pythag} with
\[
\sigma(\xi) = 1090\,\xi^4-1720\,\xi^3+1220\,\xi^2-440\,\xi+100 \,.
\]
One can verify that $(\r'(\xi)\times\r''(\xi))\cdot\r'''(\xi)\not\equiv0$, 
so $\r(\xi)$ it a true space curve. For this curve, the RRMF condition 
\eqref{rrmfeqn} is satisfied by polynomials $a(\xi)$, $b(\xi)$ with 
$\deg(a^2+b^2)=4$, namely
\[
a(\xi) = 27\,\xi^2-22\,\xi+10 \,, \quad
b(\xi) = -\,19\,\xi^2+14\,\xi \,.
\]
Since $\gcd_\rr(uv'-u'v-pq'+p'q,u^2+v^2+p^2+q^2)=\gcd_\rr(ab'-a'b,a^2+b^2)=1$, 
no cancellation occurs on the left or right in \eqref{rrmfeqn}, and we have
\[
\frac{uv'-u'v-pq'+p'q}{u^2+v^2+p^2+q^2} \,=\, \frac{ab'-a'b}{a^2+b^2} \,=\, 
\frac{4\,\xi^2-38\,\xi+14}{109\,\xi^4-172\,\xi^3+122\,\xi^2-44\,\xi+10} \,.
\]
Figure~\ref{fig:example} compares the variation of the Frenet frame and
the rational rotation--minimizing frame along the curve considered in this
example.
\end{ese}

\begin{figure}[htbp]
\centering
\epsfxsize=\textwidth \epsfbox{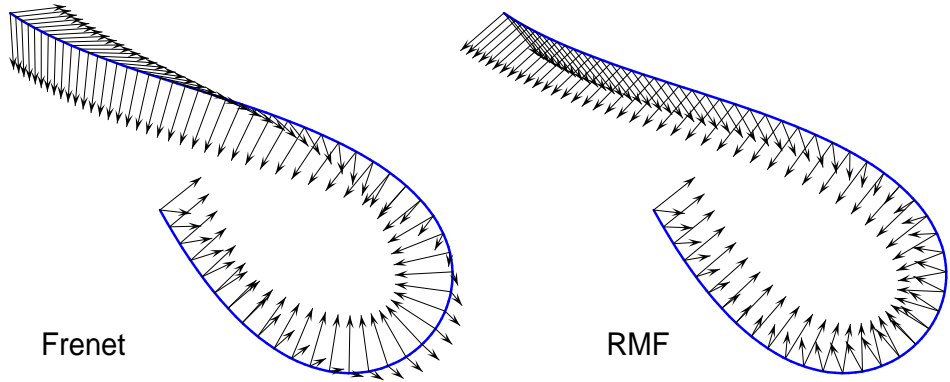}
\caption{Comparison of Frenet frame (left) and rotation--minimizing frame 
(right) along the RRMF curve in Example~\ref{example2}. For clarity, only 
the normal--plane vectors are shown (the RMF coincides with the Frenet frame 
at $\xi=0$).}
\label{fig:example}
\end{figure}

\begin{ese}
When the quaternion polynomial \eqref{A} has the components
\[
u(\xi) = 8\,\xi^2-35 \,, \;\;
v(\xi) = 16\,\xi^2-80\,\xi+90 \,, \;\;
p(\xi) = 3\sqrt{15} \,, \;\;
q(\xi) = -\,6\sqrt{15} \,,
\]
substituting into \eqref{hodo} gives
\ba
x'(\xi) \!\! &=& \!\! 
10\,(32\,\xi^4-256\,\xi^3+872\,\xi^2-1440\,\xi+865) \,, 
\nonumber \\
y'(\xi) \!\! &=& \!\! 480\sqrt{15}\,(-\,\xi+2) \,, \quad
z'(\xi) \,=\, 30\sqrt{15}\,(-8\,\xi^2+32\,\xi-29) \,, 
\nonumber
\ea
Since $\gcd_\rr(x'(\xi),y'(\xi),z'(\xi))=1$ this hodograph is primitive, and 
the Pythagorean condition \eqref{pythag} is satisfied with parametric speed
\[
\sigma(\xi) = 80(4\,\xi^2-16\,\xi+25)(\xi^2-4\,\xi+5) \,.
\]
Again $\r(\xi)$ is a true space curve, since $(\r'(\xi)\times\r''(\xi))
\cdot\r'''(\xi)\not\equiv0$. For this curve, condition \eqref{rrmfeqn} is 
satisfied by polynomials $a(\xi)$, $b(\xi)$ with $\deg(a^2+b^2)=6$, namely
\[
a(\xi) = 4\,\xi^3-24\,\xi^2+51\,\xi-38 \,, \quad
b(\xi) = -\,8\,\xi^2+32\,\xi-41 \,.
\]
Note that $\gcd_\rr(ab'-a'b,a^2+b^2)=4\,\xi^2-16\,\xi+25$, so a cancellation
occurs on the right in \eqref{rrmfeqn}, to give
\[
\frac{uv'-u'v-pq'+p'q}{u^2+v^2+p^2+q^2} \,=\, \frac{ab'-a'b}{a^2+b^2} \,=\,
\frac{8\,\xi^2-32\,\xi+35}{4\,\xi^4-32\,\xi^3+109\,\xi^2-180\,\xi+125} \,.
\]
\end{ese}

\section{Closure}
\label{sec:closure}

By introducing and exploiting the notions of the rotation indicatrix
and the core of a quaternion polynomial, a comprehensive theory of the 
entire space of polynomial curves that admit rational rotation--minimizing 
frames (RRMF curves) has been developed. This theory subsumes all the 
previously--known individual cases, and thus addresses a key open problem
in the understanding of RRMF curves identified in a recent survey paper
\cite{farouki16}. Moreover, the theory should prove useful in developing
practical algorithms for the construction of rational rotation--minimizing
rigid body motions, through the interpolation of discrete position and
orientation data \cite{farouki12,farouki13}.

Another important problem, on which the present theory can be brought
to bear, concerns the analysis of RRMF curves that satisfy condition 
\eqref{rrmfeqn} with $u^2+v^2+p^2+q^2$ and $a^2+b^2$ of both equal and 
unequal degree. Since the theory accommodates both cases, it may offer
a new path to the complete classification of possible cancellations
of non--constant factors common to the numerator and denominator on the
left or right of equation \eqref{rrmfeqn}. A detailed analysis of this 
problem is deferred to a future study.

\raggedright

\subsection*{Acknowledgements}

{\small
This work was supported by the following grants of the Italian Ministry 
of Education (MIUR): Futuro in Ricerca {\it DREAMS} (RBFR13FBI3); Futuro 
in Ricerca {\it Differential Geometry and Geometric Function Theory} 
(RBFR12W1AQ); and PRIN {\it Variet\`a reali e complesse: geometria, 
topologia e analisi armonica} (2010NNBZ78). It was also supported by 
the following research groups of the Istituto Nazionale di Alta Matematica 
(INdAM): Gruppo Nazionale per il Calcolo Scientifico (GNCS) and Gruppo 
Nazionale per le Strutture Algebriche, Geometriche e le loro Applicazioni 
(GNSAGA).
}

\def\AMC{{\it Appl.\ Math.\ Comput.\ }}
\def\AML{{\it Appl.\ Math.\ Lett.\ }}
\def\AMM{{\it Amer.\ Math.\ Monthly\ }}
\def\ACM{{\it Adv.\ Comp.\ Math.\ }}
\def\ACMTMS{{\it ACM Trans.\ Math.\ Software\ }}
\def\ACMTOG{{\it ACM Trans.\ Graphics\ }}
\def\BAMS{{\it Bull.\ Amer.\ Math.\ Soc.\ }}
\def\CA{{\it Comm.\ Alg.\ }}
\def\CAD{{\it Comput.\ Aided Design\ }}
\def\CAEJ{{\it Comput.\ Aided Eng. J.\ }}
\def\CAGD{{\it Comput.\ Aided Geom.\ Design }}
\def\CAVW{{\it Comput.\ Anim.\ Virt.\ Worlds }}
\def\CG{{\it Computers \& Graphics }}
\def\CGIP{{\it Comput.\ Graphics Image\ Proc.\ }}
\def\CMA{{\it Comput.\ Math.\ Applic.\ }}
\def\CV{{\it Complex Var.\ }}
\def\CVGIP{{\it Comput.\ Vision, Graphics, Image\ Proc.\ }}
\def\EJLA{{\it Electron.\ J.\ Lin.\ Alg.\ }}
\def\GM{{\it Graph.\ Models\ }}
\def\IBMJRD{{\it IBM J.\ Res.\ Develop.\ }}
\def\JCAM{{\it J.\ Comput.\ Appl.\ Math.\ }}
\def\IEEECGA{{\it IEEE Comput. Graph. Applic.\ }}
\def\IJAMT{{\it Int.\ J.\ Adv.\ Manuf.\ Tech.\ }}
\def\IJMS{{\it Int.\ J.\ Model.\ Sim.\ }}
\def\IJMTM{{\it Int.\ J.\ Mach.\ Tools Manuf.\ }}
\def\IJRR{{\it Int.\ J.\ Robot.\ Res.\ }}
\def\IMAJNA{{\it IMA J.\ Numer.\ Anal.\ }}
\def\IUJM{{\it Ind.\ Univ.\ J.\ Math.\ }}
\def\JMS{{\it J.\ Math.\ Sci.\ }}
\def\JC{{\it J.\ Complexity\ }}
\def\JSC{{\it J.\ Symb.\ Comput.\ }}
\def\MC{{\it Math.\ Comp.\ }}
\def\MMA{{\it Math.\ Model.\ Anal.\ }}
\def\MMAS{{\it Math.\ Methods\ Appl.\ Sci.\ }}
\def\MJM{{\it Milan J.\ Math.\ }}
\def\MMJ{{\it Mich.\ Math.\ J.\ }}
\def\MZ{{\it Math.\ Z.\ }}
\def\NA{{\it Numer.\ Algor.\ }}
\def\NMTMA{{\it Numer.\ Math.\ Theor.\ Meth.\ Appl.\ }}
\def\PAMS{{\it Proc.\ Amer.\ Math.\ Soc.\ }}
\def\SIAMJNA{{\it SIAM J.\ Numer.\ Anal.\ }}
\def\SIAMR{{\it SIAM Rev.\ }}
\def\TAMS{{\it Trans.\ Amer.\ Math.\ Soc.\ }}

\raggedright


\begin{thebibliography}{88}

\bibitem{barton10}
M.~Barton, B.~J\"uttler, and W.~Wang (2010), 
Construction of rational curves with rational rotation--minimizing
frames via M\"obius transformations, in {\it Mathematical Methods for
Curves and Surfaces 2008}, Lecture Notes in Computer Science 5862,
15--25, Springer, Berlin.

\bibitem{bishop75}
R.~L.~Bishop (1975), There is more than one way to frame a curve,
\AMM{\bf 82}, 246--251.

\bibitem{cheng16}
C.~C--A.~Cheng and T.~Sakkalis (2016),
On new types of rational rotation--minimizing space curves,
\JSC{\bf 74}, 400-407.

\bibitem{choi02a}
H.~I.~Choi and C.~Y.~Han (2002),
Euler--Rodrigues frames on spatial Pythagorean--hodograph curves,
\CAGD{\bf 19}, 603--620.

\bibitem{choi02b}
H.~I.~Choi, D.~S.~Lee, and H.~P.~Moon (2002),
Clifford algebra, spin representation, and rational parameterization
of curves and surfaces,
\ACM{\bf 17}, 5--48.

\bibitem{damiano10}
A.~Damiano, G.~Gentili, D.~C.~Struppa (2010),
Computations in the ring of quaternionic polynomials,
\JSC{\bf 45}, 38--45.

\bibitem{farouki08}
R.~T.~Farouki (2008),
{\it Pythagorean--Hodograph Curves: Algebra and Geometry Inseparable},
Springer, Berlin.

\bibitem{farouki10a}
R.~T.~Farouki (2010),
Quaternion and Hopf map characterizations for the existence of rational 
rotation--minimizing frames on quintic space curves, 
\ACM{\bf 33}, 331--348.

\bibitem{farouki16}
R.~T.~Farouki (2016),
Rational rotation--minimizing frames --- recent advances and open problems, 
\AMC{\bf 272}, 80--91.

\bibitem{farouki02}
R.~T.~Farouki, M.~al--Kandari, and T.~Sakkalis (2002),
Structural invariance of spatial Pythagorean hodographs,
\CAGD{\bf 19}, 395--407.

\bibitem{farouki09}
R.~T.~Farouki, C.~Giannelli, C.~Manni, and A.~Sestini (2009),
Quintic space curves with rational rotation--minimizing frames,
\CAGD{\bf 26}, 580--592.

\bibitem{farouki12}
R.~T.~Farouki, C.~Giannelli, C.~Manni, and A.~Sestini (2012),
Design of rational rotation--minimizing rigid body motions by 
Hermite interpolation,
\MC{\bf 81}, 879--903.

\bibitem{farouki03}
R.~T.~Farouki and C.~Y.~Han (2003),
Rational approximation schemes for rotation--minimizing frames 
on Pythagorean--hodograph curves, 
\CAGD{\bf 20}, 435--454.

\bibitem{farouki13}
R.~T.~Farouki, C.~Y.~Han, P.~Dospra, and T.~Sakkalis (2013),
Rotation--minimizing Euler--Rodrigues rigid--body motion interpolants, 
\CAGD{\bf 30}, 653--671.

\bibitem{farouki04}
R.~T.~Farouki, C.~Y.~Han, C.~Manni, and A.~Sestini (2004),
Characterization and construction of helical polynomial space curves,
\JCAM{\bf 162}, 365--392.

\bibitem{farouki10b}
R.~T.~Farouki and T.~Sakkalis (2010),
Rational rotation--minimizing frames on polynomial space curves of 
arbitrary degree, 
\JSC{\bf 45}, 844--856.

\bibitem{farouki12b}
R.~T.~Farouki and T.~Sakkalis (2012),
A complete classification of quintic space curves with rational 
rotation--minimizing frames,
\JSC{\bf 47}, 214--226.

\bibitem{farouki13b}
R.~T.~Farouki and T.~Sakkalis (2013), 
Corrigendum to ``Rational rotation-minimizing frames on polynomial space 
curves of arbitrary degree'' [J.~Symbolic Comput.~45 (2010) 844--856],
\JSC{\bf 58}, 99--102.

\bibitem{gentili08a}
G.~Gentili and C.~Stoppato (2008),
Zeros of regular functions and polynomials of a quaternionic variable,
\MMJ{\bf 56}, 655--667.

\bibitem{gentili08b}
G.~Gentili and D.~C.~Struppa (2008), 
On the multiplicity of zeroes of polynomials with quaternionic coefficients,
\MJM{\bf 76}, 15--25.

\bibitem{han08}
C.~Y.~Han (2008),
Nonexistence of rational rotation--minimizing frames on cubic curves, 
\CAGD{\bf 25}, 298--304.

\bibitem{wang97}
W.~Wang and B.~Joe (1997), 
Robust computation of the rotation--minimizing frame for sweep surface 
modelling, 
\CAD{\bf 29}, 379--391.

\bibitem{wang08}
W.~Wang, B.~J\"{u}ttler, D.~Zheng, and Y.~Liu (2008), 
Computation of rotation--minimizing frames, 
\ACMTOG{\bf 27} (1), 1--18, Article 2.

\end{thebibliography}
\end{document}